\newtheorem{theorem}{Theorem}[subsection]
\newtheorem{lemma}[theorem]{Lemma}
\newtheorem{proposition}[theorem]{Proposition}
\theoremstyle{definition}
\newtheorem{definition}[theorem]{Definition}
\newtheorem{example}[theorem]{Example}
\newcommand\eg{\emph{e.g.}}
\newcommand\B{\text{B}}
\newcommand\BB{\emph{B}}
\newcommand\K{\mathbb{K}}
\newcommand\R{\mathbb{R}}
\newcommand\Gr{Gröbner}
\newcommand\KG{\widehat{\K G}}
\newcommand\N{\mathbb{N}}
\newcommand\id[1]{\text{id}_{#1}}
\newcommand\idd[1]{\emph{id}_{#1}}
\newcommand\supp{\text{supp}}
\newcommand\suppp{\emph{supp}}
\newcommand\lm{\text{lm}}
\newcommand\lmm{\emph{lm}}
\newcommand\lc{\text{lc}}
\newcommand\lcc{\emph{lc}}
\newcommand\RO{\textbf{RO}\left(G,\ordred,d\right)}
\newcommand\ROO{\emph{\textbf{RO}}\left(G,\ordredd,d\right)}
\newcommand\ROG{\textbf{RO}}
\newcommand\ROGG{\emph{\textbf{RO}}}
\newcommand\red{\text{red}}
\newcommand\redd{\emph{red}}
\newcommand\nf{\text{nf}}
\newcommand\nff{\emph{nf}}
\newcommand\im{\text{im}}
\newcommand\imm{\emph{im}}
\newcommand\ordred{<_{\text{red}}}
\newcommand\ordredd{<_{\emph{red}}}
\newcommand\obs{\text{obs}(F)}
\newcommand\rewF{\to_F}
\newcommand\G[1]{G^{\left(#1\right)}}
\newcommand\ordop{<^{\text{op}}}
\newcommand\ordopref{\leq^{\text{op}}}
\newcommand\dual[2]{\langle#1\mid#2\rangle}
\newcommand\Span[1]{\langle#1\rangle}
\newcommand\len[1]{\mid#1\mid}
\newcommand\val{\text{val}}
\newcommand\nPol{\K\nf(T)}
\newcommand\nSeries{\widehat{\K\nf(T)}}
\newcommand\nSeriess{\widehat{\K\nff(T)}}
\newcommand\cPol{\K[X]}
\newcommand\cSeries{\K[[X]]}
\newcommand\ncPol{\K\Span{X}}
\newcommand\ncSeries{\K\Span{\Span{X}}}
\newcommand\rewTransSym{\overset{*}{\to}}
\newcommand\rewRef{\leftrightarrow}
\newcommand\rewEquiv{\overset{*}{\leftrightarrow}}
\newcommand\rewTop{\Rightarrow}
\newcommand\discTop[1]{\tau^{\text{d}}_{#1}}
\begin{document}

\title{Topological rewriting systems\\
  applied to standard bases and syntactic algebras}
\author{Cyrille Chenavier\footnote{
      Inria Lille - Nord Europe, équipe Valse, cyrille.chenavier@inria.fr.
  }
  }
\date{}

\maketitle
      
\begin{abstract}
  We introduce topological rewriting systems as a generalisation of
  abstract rewriting systems, where we replace the set of terms by a
  topological space. Abstract rewriting systems correspond to topological
  rewriting systems for the discrete topology. We introduce the
  topological confluence property as an approximation of the confluence
  property. Using a representation of linear topological rewriting
  systems with continuous reduction operators, we show that the
  topological confluence property is characterised by lattice operations.
  Using this characterisation, we show that standard bases induce
  topologically confluent rewriting systems on formal power series.
  Finally, we investigate duality for reduction operators that we relate
  to series representations and syntactic algebras. In particular, we use
  duality for proving that an algebra is syntactic or not.
\end{abstract}
\noindent
\begin{small}\textbf{Keywords:} topological confluence, standard
  bases, series representations and syntactic algebras.\\[0.2cm]
  \textbf{M.S.C 2010 - Primary:} 13F25, 68Q42. \textbf{Secondary:} 03G10.
\end{small}

\tableofcontents

\vspace{1cm}

\begin{center}
  \bf\large 1. INTRODUCTION
  \addcontentsline{toc}{section}{1. Introduction}
\end{center}

\bigskip

Algebraic rewriting systems are computational models used to deduce
algebraic properties through rewriting reasoning. The approach
consists in orienting the generating relations in a presentation by
generators and relations, \eg, of a monoid, a category, a (commutative,
Lie, noncommutative) algebra or an operad, into rewriting rules, and
extend them into rewriting steps. The way we extend these rules takes
into account the underlying algebraic context, but some rewriting
properties have a universal formulation, independent of the context. Such
properties are termination, that is, there is no infinite sequence of
rewriting steps, or confluence, that is, every two rewriting sequences
starting at the same term $t$ may be continued until a common target term
$t'$, as represented on the following diagram:
\[\begin{tikzcd}
&
. \arrow[rd, bend left, dashed, "*"]&
\\
t\arrow[ru, bend left, "*"]\arrow[rd, bend right, "*"'] & & t'
\\
&
. \arrow[ru, dashed, bend right, "*"']&
\end{tikzcd}\]
The binary relation $\overset{*}{\to}$ denotes rewriting sequences. Under
hypotheses of termination and confluence, computing irreducible terms,
also called normal forms, has algorithmic applications, for instance to
the decision of the word or the ideal membership problems. It also
provides effective methods for computing linear bases, Hilbert series,
homotopy bases or free resolutions~\cite{MR846601, MR2964639, MR1072284},
and obtain  constructive proofs of coherence theorems, from which we
deduce an explicit description of the action of a monoid on a
category~\cite{MR3347996}, or of homological properties, such as finite 
derivation type, finite homological type \cite{MR3742562, MR920522}, or
Koszulness~\cite{MR265437}.

When one rewrite terms of linear structures, a relation is usually
oriented by rewriting one monomial into the linear combination of other
monomials, and there exist three main approaches for selecting the
rewritten monomial. The most classical one uses monomial orders and
induces a rewriting characterisation of \Gr\ bases: they are generating
sets of polynomials ideals which induce confluent rewriting systems. As a
consequence, effective confluence-based criteria were introduced for
checking if a given set is a \Gr\ basis or one of its numerous
adaptations to different types of algebras or operads
\cite{MR0506423, MR0463136, MR2667136, MR1044911, MR1299371, MR0183753}.
Another approach consists in selecting the
reducible monomials with more flexible orders than monomial ones, which
may be used for  proving Koszulness of algebras for which \Gr\ bases give
no result~\cite{GuiraudHoffbeckMalbos19}. Finally, rewriting steps may be
described in a functional manner~\cite{MR506890, MR1691990, MR3707860},
so that linear rewriting systems are represented by reduction operators.
From this approach, the confluence property is characterised by means of
lattice  operations~\cite{MR3673007}, which provides various applications
to computer algebra and homological algebra: construction of \Gr\ bases
\cite{MR3918052}, computation of syzygies~\cite{MR3850567} or proofs of
Koszulness~\cite{MR1608711, MR1832913, MR3503238, MR3299599}.
\smallskip

Rewriting methods based on monomial orders were also developed for
formal power series, where the leading monomial of a series is the
smallest monomial in its decomposition. Standard bases were introduced by
Hironaka~\cite{MR0199184}, and are analogous to \Gr\ bases: they are
generating sets of power series ideals such that their leading monomials
generate leading monomials of the ideal. A notable difference is that
standard bases are not characterised in terms of the confluence property,
which is an obstruction for allowing more flexible orders than monomial
ones. For instance, for the deglex order induced by $x>y>z$, the
polynomials $z-y,\ z-x,\ y-y^2$ and $x-x^2$ form a standard basis of the
ideal they generate in the power series ring $\K[[x,y,z]]$, but they do
not induce a confluent rewriting system, as illustrated by the following
diagram
\[\begin{tikzcd}
&
x\arrow[r] & x^2\arrow[r] & \cdots\arrow[r] & x^{2n}\arrow[r] & \cdots
\\
z\arrow[ru, bend left]\arrow[rd, bend right] & & & &
\\
&
y\arrow[r] & y^2\arrow[r] & \cdots\arrow[r] & y^{2n}\arrow[r] & \cdots
\end{tikzcd}\]
However, this diagram becomes confluent when passing to the limit since
the two sequences $(x^{2n})_n$ and $(y^{2n})_n$ both converge to zero for
the $I$-adic topology, where $I$ is the power series ideal generated by
$x,\ y$ and $z$. Note that this asymptotical behaviour of rewriting
sequences is also investigated in computer science, for instance in the
probabilistic $\lambda$-calculus~\cite{faggian2019probabilistic}. 
\smallskip

In the present paper, we introduce a new paradigm of rewriting by
considering rewriting systems on topological spaces, from which we take
into account the topological properties of rewriting sequences. We also
develop the functional approach to rewriting on formal power series. We
get the following two applications: we characterise standard bases in
terms of a topological confluence property and we introduce a criterion
for an algebra to be syntactic.

\paragraph{Topological rewriting systems.} We introduce
\emph{topological rewriting systems}, which, by definition, are triples
$(A,\tau,\to)$, where $\tau$ and $\to$ are a topology and a binary
relation on $A$, respectively. We also introduce the
\emph{topological confluence} property, meaning that two rewriting
sequences starting at the same term $t$ may be continued to reach target
terms in arbitrary neighbourhoods of a term $t'$. Denoting by
$\Rightarrow$ the topological closure of $\overset{*}{\to}$ for the
product of the discrete topology and $\tau$, the topological confluence
property is represented by the following diagram:
\[\begin{tikzcd}
&
. \arrow[Rightarrow, rd, bend left, dashed]&
\\
t\arrow[ru, bend left, "*"]\arrow[rd, bend right, "*"'] & & t'
\\
&
. \arrow[Rightarrow, ru, dashed, bend right]&
\end{tikzcd}\]
We recover abstract rewriting systems and the usual confluence property
when $\tau$ is the discrete topology. Notice that in this topological
framework, we are not interested in the termination property since we
allow confluence "at the limit". Guided by the aforementioned
applications of reduction operators to computer algebra and homological
algebra, we introduce a topological adaptation of these operators. For
topological vector spaces, monomials form a total family, that is, a free
family generating a dense subspace, and a reduction operator maps such a
monomial into a possibly infinite linear combination of smaller
monomials, that is, a formal series. In
Theorem~\ref{thm:lattice_structure}, we extend the lattice structure
introduced in~\cite{MR3673007} for the discrete topology to topological
vector spaces. From this, we deduce a lattice characterisation of the
topological confluence property in Theorem~\ref{thm:lattice_confluence}. 

\paragraph{Topological confluence for standard bases.} We show that
standard bases are characterised in terms of the topological confluence
property. For that, we first notice that the $I$-adic topology on formal
power series comes from a metric $\delta$, that we recall in Section 4,
and we simply say $\delta$-confluence for the topological confluence
property associated with this metric. Morever, we show in
Proposition~\ref{prop:lattice_car_of_GB} that standard bases are
represented by reduction operators which satisfy the lattice criterion of
topological confluence proven in Theorem~\ref{thm:lattice_confluence}.
Thus, denoting by $\to_R$ the rewriting relation eliminating leading
monomials of a set $R$ of formal power series, our first main result is
stated as follows:
\begin{quote}
  \textbf{Theorem~\ref{thm:standard_bases_confluence}.} \emph{A subset R
    of $\cSeries$ is a standard basis of the ideal it generates if and
    only if the rewriting relation $\to_R$ is $\delta$-confluent.}
\end{quote}

\paragraph{Duality and syntactic algebras.} A formal power series 
uniquely defines a linear form on polynomials. A representation of a
series is a quotient of a polynomial algebra which factorises the linear
form associated with this series, and there always exists a minimal
representation, called the syntactic algebra. An algebra is said to be
syntactic if it is the syntactic algebra of a formal power series. When
they are noncommutative, these algebras may be thought as a
generalisationof automata in the theory of formal languages through the
followinggeneralisation of Kleene's Theorem: a formal power series is
rational if and only if its syntactic algebra is finite-dimensional
\cite{MR593604}. We characterise syntactic algebras in terms of duality
for reduction operators. We expect that this characterisation may be used
for proving that a series is rational or not. Hence, denoting by $\ncPol$
the algebra of noncommutative polynomials over $X$, $\nf(T)$ the set of
normal form monomials for the reduction operator $T$ and by $\nSeries$
the set of formal power series whose nonzero coefficients only involve
elements of $\nf(T)$, our second main result is the following:
\begin{quote}
  \textbf{Theorem~\ref{thm:syntactic_algebra_duality}.} \emph{Let
  $I\subseteq\K\Span{X}$ be an ideal and let T be the reduction operator
    with kernel~I. Then, the algebra $\ncPol/I$ is syntactic if and only if
    there exists $S'\in\nSeriess$ such that I is the greatest ideal
    included in $I\oplus\ker(S')$.}
\end{quote}
This is a duality condition since $\nSeries$ is the kernel of the adjoint
operator of $T$. Finally, we illustrate this criterion with examples of
syntactic and non-syntactic algebras coming from \cite{petitot1992algebre,
  MR593604}.

\paragraph{Organisation.} In Section 2, we introduce topological
reduction operators and show that they admit a lattice structure defined
in terms of kernels. In Section 3.1, we introduce topological confluence
and show that for topological vector spaces, it is characterised in terms
of lattice operations. In Section~3.2, we relate representations of
formal series to duality of reduction operators. In Section 4, we present
two applications of our methods to formal power series. First, we
characterise standard bases in terms of topological confluence. Then, we
formulate a duality criterion for an algebra to be syntactic, and
illustrate it with examples.

\paragraph{Acknowledgment.} The author wishes to thank Michel Petitot for
having suggested to study syntactic algebras, for having pointed out
references on the topic as well as for helpful discussions. The author
also wishes to thank the reviewer for its suggestions to improve the
final version of the paper.
\bigskip

\begin{center}
  \bf\large 2. TOPOLOGICAL REDUCTION OPERATORS
  \addcontentsline{toc}{section}{2. Topological reduction operators}
  \setcounter{section}{2}
\end{center}

\smallskip

In this Section, we introduce reduction operators on topological vector
spaces and show that they admit a lattice structure.

\bigskip

\noindent
{\large\textbf{2.1. Order relation on topological reduction operators}}
\addcontentsline{toc}{subsection}{2.1. Order relation on topological reduction operators}
\setcounter{subsection}{1}
\setcounter{theorem}{0}
\bigskip

\noindent
We fix an ordered set $(G,\ordred)$, a map $d:G\to\R_{>0}$ and a
commutative field $\K$, equipped with the discrete topology. Let $\K G$
be the vector space spanned by $G$. For every $g\in G$, let
\[\pi_g:\K G\to\K,\]
be the linear morphism mapping $v\in\K G$ to the coefficient of $g$ in
$v$. We equip $\K G$ with the metric $\delta$ defined as follows:
\begin{equation}\label{equ:metric}
  \delta(u,v):=\max\left\{d(g)\mid \pi_g(u-v)\neq 0\right\}.
\end{equation}
\noindent
In particular, for every $g\in G$, we have $d(g)=\delta(0,g)$. Let us
denote by $(\KG,\delta)$ the completion of $(\K G,\delta)$. The open ball
in $\KG$ of center $v\in\KG$ and radius $\epsilon>0$ is written
$\B(v,\epsilon)$, and the topological closure of a subset $V$ in $\KG$ is
written $\overline{V}$.

\medskip

Before introducing reduction operators in
Definition~\ref{def:reduction_operators}, we establish some topological
properties of $\KG$, which do not depend on $\ordred$.

\smallskip

\begin{proposition}\label{prop:topological_properties}
  The metric spaces $(\K G,\delta)$ and $(\KG,\delta)$ are topological
  vector spaces. For every $g\in G$, the morphism $\pi_g$ is continuous.
\end{proposition}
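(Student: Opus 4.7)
The plan is to extract all continuity properties from the fact that $\delta$ is a translation-invariant ultrametric. Since $\supp(u-w)\subseteq\supp(u-v)\cup\supp(v-w)$ for any $u,v,w\in\K G$, the definition of $\delta$ yields the ultrametric inequality $\delta(u,w)\leq\max(\delta(u,v),\delta(v,w))$, and from the identity $(u+w)-(v+w)=u-v$ one obtains translation invariance. Combining these two facts gives the estimate $\delta(u+u',v+v')\leq\max(\delta(u,v),\delta(u',v'))$, which shows that addition is continuous on $\K G$. For scalar multiplication, I would exploit the discreteness of $\K$: any open set of $\K\times\K G$ containing $(\lambda,v)$ already contains $\{\lambda\}\times U$ for some open neighbourhood $U$ of $v$, so it suffices to check that for each fixed $\lambda$, the map $v\mapsto\lambda v$ is continuous. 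The case $\lambda=0$ gives a constant map, and if $\lambda\neq 0$ then $\supp(\lambda u-\lambda v)=\supp(u-v)$, so $v\mapsto\lambda v$ is even an isometry.

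For the continuity of $\pi_g$, the key observation is that $\pi_g$ is locally constant. Indeed, given $u\in\K G$, any $v$ with $\delta(u,v)<d(g)$ must satisfy $\pi_g(u-v)=0$: otherwise $d(g)$ would appear in the set over which the maximum defining $\delta(u,v)$ is taken, forcing $\delta(u,v)\geq d(g)$. Hence $\B(u,d(g))\subseteq\pi_g^{-1}(\pi_g(u))$, which proves continuity of $\pi_g$ since $\K$ carries the discrete topology.

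It remains to transfer these properties to the completion $\KG$. The operations $(u,v)\mapsto u+v$ and $v\mapsto\lambda v$, as well as each $\pi_g$, are uniformly continuous on $\K G$ (the first two by the ultrametric estimates above, the third because it is locally constant on a uniform $d(g)$-scale). By the standard extension theorem for uniformly continuous maps into complete spaces — noting that $\K$ is complete since it is discrete — these maps extend uniquely to continuous maps on $\KG$. The vector space axioms for the extended $+$ and scalar multiplication then propagate from $\K G$ to $\KG$ by density and continuity, so $(\KG,\delta)$ is indeed a topological vector space and the extensions of $\pi_g$ remain continuous.

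The main obstacle I anticipate is purely bookkeeping around the completion step: checking that the extended algebraic operations coincide with those defined on formal series and that the continuous extension of $\pi_g$ really is the coefficient-reading map. Both follow from uniqueness of uniformly continuous extensions combined with density of $\K G$ in $\KG$, so no genuinely hard point arises.
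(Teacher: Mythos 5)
Your proof is correct and follows essentially the same route as the paper: exploit that $\delta$ is a translation-invariant ultrametric and that $\K$ is discrete to get continuity of the operations on $\K G$, show $\pi_g$ is locally constant, and then pass to the completion by density. One small remark: your inequality $\delta(u+u',v+v')\leq\max(\delta(u,v),\delta(u',v'))$ is the correct one; the paper's proof states the analogous estimate with $\geq$, which is a typo.
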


\begin{proof}
  For the first part of the Proposition, it is sufficient to show that
  $(\K G,\delta)$ is a topological vector space. Let $\lambda\in\K$,
  $v\in\K G$ and $U$ a neighbourhood of $\lambda v$, so that there exists
  $\epsilon >0$ such that $\delta(\lambda v,u)<\epsilon$ implies
  $u\in U$. Using $\delta(\mu v_1,\mu v_2)\leq\delta(v_1,v_2)$ for every
  $v_1,v_2\in\K G$ and $\mu\in\K$, $\{\lambda\}\times~\B(v,\epsilon)$ is
  an open neighbourhood of $(\lambda,v)$ in the inverse image of $U$
  through the scalar multiplication $(\lambda,v)\mapsto~\lambda v$.
  Hence, the latter is continuous. Let $(v_1,v_2)\in\K G$ and $U$ be a
  neighbourhood of $v_1+v_2$, so that there exists $\epsilon>0$ such that
  $\delta(v_1+v_2,v)<\epsilon$ implies $v\in U$. Using 
  $\delta(u_1+u_2,u'_1+u'_2)\geq\max(\delta(u_1,u'_1), \delta(u_2,u'_2))$,
  $\B(v_1,\epsilon)\times\B (v_2,\epsilon)$ is an open neighbourhood of
  $(v_1,v_2)$ in the inverse image of $U$ through the addition
  $(v_1+v_2)\mapsto v_1+v_2$. Hence, the latter is continuous.

  For the second assertion, it is sufficient to show that $\pi_g$ is
  continuous at $0$, that is, $\pi_g^{-1}(0)$ is open. This is true since
  for $v\in\pi_g^{-1}(0)$, $\B(v,d(g))$ is included in $\pi_g^{-1}(0)$.
  \newline
\end{proof}
\medskip

By density of $\K G$ in $\KG$, the continuous morphism $\pi_g$ induces a
continuous morphism, still written $\pi_g$, from $\KG$ to $\K$.

\smallskip

\begin{definition}
  The \emph{support} of $v\in\KG$ is the set
  $\supp (v):=\left\{g\in G\mid \pi_g(v)\neq0\right\}$.
\end{definition}

Notice that $\pi_g$ being continuous and $\K$ being equipped with the
discrete topology, we have $\pi_g(v)=~\pi_g(u)$ for every $u\in\K G$ such
that $\delta(u,v)$ is small enough.
\medskip

In Formula~\eqref{equ:series}, we describe elements of $\KG$ in terms of
formal series. For that, we need preliminary results on supports that we
present in Lemma~\ref{lem:properties_of_the_support}. In the latter, we
use the following notation: for $v\in\KG$ and $\epsilon>0$, we let
\begin{equation}\label{equ:supp_epsilon}
\supp(v)_{\geq\epsilon}:=\left\{g\in\supp(v)\mid d(g)\geq\epsilon
\right\}.
\end{equation}

\begin{lemma}\label{lem:properties_of_the_support}
  For every $u,v\in\KG$, $\suppp(u+v)$ is included in
  $\suppp(u)\cup\suppp(v)$. Moreover, the sets $\suppp(v)_{\geq\epsilon}$
  are finite and $\suppp(v)$ is countable.
\end{lemma}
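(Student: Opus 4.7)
The plan is to dispatch the three assertions separately, and they get progressively less immediate. For the first statement, the idea is to use the linearity of $\pi_g$. Since $\pi_g$ is a continuous linear morphism on $\KG$, for any $g\notin\supp(u)\cup\supp(v)$ we have $\pi_g(u)=\pi_g(v)=0$, hence $\pi_g(u+v)=0$, so $g\notin\supp(u+v)$. This is essentially one line.

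For the finiteness of $\supp(v)_{\geq\epsilon}$, I would use the density of $\K G$ in $\KG$ to pick $u\in\K G$ with $\delta(u,v)<\epsilon$. The key observation is that the extended metric $\delta$ on $\KG$ still satisfies the characterisation $\delta(u,v)<\epsilon\iff\pi_g(u-v)=0$ for every $g$ with $d(g)\geq\epsilon$; this should be obtained by taking a Cauchy sequence $(u_n)$ in $\K G$ converging to $v$ and using that $\pi_g$ is continuous with $\K$ discrete, so $(\pi_g(u_n))$ is eventually constant, equal to $\pi_g(v)$. It then follows that $\pi_g(u)=\pi_g(v)$ whenever $d(g)\geq\epsilon$, which forces $\supp(v)_{\geq\epsilon}\subseteq\supp(u)$. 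The right-hand side is finite because $u\in\K G$ is a finite linear combination of elements of $G$, so $\supp(u)$ itself is finite.

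Finally, for the countability of $\supp(v)$, I would write
\[
\supp(v)=\bigcup_{n\geq1}\supp(v)_{\geq1/n},
\]
which is the decomposition of $\supp(v)$ according to the values $d(g)\geq 1/n$. Since each $\supp(v)_{\geq 1/n}$ is finite by the preceding step, $\supp(v)$ is a countable union of finite sets, hence countable. Note that this decomposition does exhaust $\supp(v)$ because every $g\in G$ satisfies $d(g)>0$, so $d(g)\geq1/n$ for $n$ large enough.

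The only step that requires care is the promotion of the metric characterisation from $\K G$ to the completion $\KG$; once the equivalence $\delta(u,v)<\epsilon\iff\pi_g(u-v)=0$ for all $g$ with $d(g)\geq\epsilon$ is available on $\KG$, the rest is routine. This is where I would spend the most attention, leveraging the continuity of $\pi_g$ proven in Proposition~\ref{prop:topological_properties} together with the fact that $\K$ is discrete so that $\pi_g$-values along a Cauchy sequence stabilise.
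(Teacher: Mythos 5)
Your proposal is correct and follows essentially the same route as the paper: for the first claim you invoke linearity of the extended $\pi_g$ directly while the paper takes limits along Cauchy sequences (two phrasings of the same fact); for the finiteness of $\supp(v)_{\geq\epsilon}$ both arguments approximate $v$ by some $u\in\K G$ within $\epsilon$ and deduce $\supp(v)_{\geq\epsilon}\subseteq\supp(u)$; and the countability step is identical. The one place you were right to flag as needing care, namely that $\delta(u,v)<\epsilon$ forces $\pi_g(u)=\pi_g(v)$ for $d(g)\geq\epsilon$ on all of $\KG$, is indeed the small lemma the paper uses tacitly, and your stabilisation argument for $\pi_g$ along a Cauchy sequence closes it.
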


\begin{proof}
  Let us show the first assertion. Let $g$ in the complement of
  $\supp(v_1)\cup\supp(v_2)$ in $G$ and let $(u_n)_n$ and $(v_n)_n$ be
  sequences in $\K G$ converging to $u$ and $v$, respectively. For $n$
  large enough, $\pi_g(u_n)$ and $\pi_g(v_n)$ are equal to $0$, so that
  $\pi_g(u+v)=0$.

  Let us show the second assertion. For every $\epsilon>0$, there exists
  $u\in\K G$ such that $\delta(u,v)<\epsilon$, that is, $d(g)<\epsilon$
  for every $g\in\supp(v-u)$. From the first part of the Lemma, 
  $\supp(v)_{\geq\epsilon}$ is included in the finite set $\supp(u)$. 
  Moreover, $\supp(v)$ is the countable union of the finite sets
  $\supp(v)_{\geq 1/n}$, where $n\in\N$, so that it is countable.
  \newline
\end{proof}
\medskip

For every strictly positive integer $n$, let
$\G{n}:=\{g\in G\mid1/n\leq d(g)<1/(n-1)\}$, and for every $v\in\KG$, let
\begin{equation}\label{equ:def_vn}
  v_n:=\sum_{g\in V_n}\pi_g(v)g\in\K G,
\end{equation}
where $V_n:=\supp(v)\cap\G{n}$, that is,
$V_n=\left\{g\in\supp(v)\mid 1/n\leq d(g)<1/(n-1)\right\}$. The sequence
of partial sums $(v_1+\cdots+v_n)_n$ converges to $v$ and the sets $V_n$
form a partition of $\supp(v)$. Hence, we may identify $v$ with the
following formal series:

\begin{equation}\label{equ:series}
  v=\sum_{n\in\N}v_n=\sum_{g\in\supp(v)}\pi_g(v)g.
\end{equation}
\smallskip

In the following Proposition, we present a necessary and sufficient
condition such that $\K G$ is a complete metric space.

\begin{proposition}\label{prop:topology_for_polynomials}
  We have $\KG = \K G$ if and only if $0\in\R$ does not belong to the
  topological closure of $\imm(d)$ in $\R$.
\end{proposition}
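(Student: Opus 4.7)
My plan is to prove the two implications separately, using the description of elements of $\KG$ as formal series given in~\eqref{equ:series} together with Lemma~\ref{lem:properties_of_the_support}.

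For the "if" direction, suppose $0$ is not in $\overline{\im(d)}$. Then there exists $\epsilon>0$ such that $d(g)\geq\epsilon$ for every $g\in G$. Given $v\in\KG$, we have $\supp(v)=\supp(v)_{\geq\epsilon}$, which is finite by Lemma~\ref{lem:properties_of_the_support}. Hence the series expansion~\eqref{equ:series} reduces to a finite sum, i.e.\ $v\in\K G$, so $\KG=\K G$.

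For the "only if" direction, I assume $0\in\overline{\im(d)}$ and exhibit an element of $\KG$ that is not in $\K G$. Since $d$ takes strictly positive values, the preimage $d^{-1}(\{r\})$ for any $r>0$ cannot contribute to a sequence of $d$-values converging to $0$ unless different elements of $G$ are used; more precisely, from a sequence $(r_n)$ in $\im(d)$ tending to $0$ one extracts a sequence $(g_n)$ of pairwise distinct elements of $G$ such that $d(g_n)$ tends to $0$. Then the partial sums $w_N:=\sum_{n\leq N}g_n\in\K G$ form a Cauchy sequence in $(\K G,\delta)$, because $\delta(w_N,w_M)=\max_{N<n\leq M}d(g_n)$ tends to $0$ as $N,M\to\infty$. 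Its limit $v\in\KG$ has $\pi_{g_n}(v)=1$ for every $n$ by continuity of $\pi_{g_n}$, so $\supp(v)$ is infinite and $v\notin\K G$.

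The only technical point is the extraction of the pairwise distinct sequence $(g_n)$, which I would handle by noting that for each fixed $g\in G$ only finitely many indices $n$ can satisfy $d(g_n)<d(g)$ while having $g_n=g$ (in fact none, once $d(g_n)<d(g)$), so a diagonal-type extraction yields distinct elements. Everything else is a direct consequence of~\eqref{equ:series}, Lemma~\ref{lem:properties_of_the_support} and the continuity of the coefficient maps $\pi_g$ from Proposition~\ref{prop:topological_properties}.
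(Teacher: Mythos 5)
Correct, and essentially the same approach as the paper: both directions rest on the series description~\eqref{equ:series} and the fact that $\supp(v)_{\geq\epsilon}$ is finite. Your treatment of the ``only if'' direction is more explicit than the paper's terse one-line argument, which leaves the construction of an element of $\KG\setminus\K G$ implicit; your Cauchy-sequence $w_N=\sum_{n\leq N}g_n$ with pairwise distinct $g_n$ and $d(g_n)\to 0$ is the natural way to fill that gap, and the extraction of distinct $g_n$ (take a strictly decreasing subsequence of $d$-values) is sound even if slightly over-explained.
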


\begin{proof}
  We have $0\notin\overline{\im(d)}$ if and only if there exists $n>0$
  such that $d(g)\geq1/n$, for every $g\in G$. With the notations of
  \eqref{equ:series}, that means that each $v\in\KG$ is equal to
  $v_1+\cdots+v_n$, that is, $v\in\K G$.
  \newline
\end{proof}
\medskip

\begin{example}\label{ex:pol_and_formal_series}
  Let $X$ be a finite set of indeterminates and let $G$ be the set of
  (non)commutative monomials over $X$, so that $\K G$ is isomorphic to
  the free (non)commutative polynomial algebra over $X$. If $d$ is
  constant equal to $1$, then the metric $\delta$  of \eqref{equ:metric}
  satisfies $\delta(f,g)=1$ whenever $f\neq g$, so that $\KG$ is the
  (non)commutative polynomial algebra equipped with the discrete
  topology. If $d(m)=1/2^n$ for every (non)commutative monomial of degree
  $n$, then $\KG$ is the set of (non)commutative formal power series over
  $X$ and $\delta$ is the metric of the $I$-adic topology, where $I$ is
  the two-sided power series ideal generated by $X$. In Section 4, we
  investigate this topological vector space and rewriting systems on it
  in more details.
\end{example}

Now, we introduce reduction operators and present some of their basic
properties.
\smallskip

\begin{definition}\label{def:reduction_operators}
  A \emph{reduction operator} relative to $\left(G,\ordred,d\right)$ is a
  continuous linear projector of $\KG$ such that for every $g\in G$,
  $T(g)\neq g$ implies $g'\ordred g$, for every $g'\in\supp(T(g))$.
\end{definition}

\medskip

The set of reduction operators is written $\RO$ and for $T\in\RO$, we
let
\[
\nf(T):=\left\{g\in G\mid T(g) = g\right\}\ \ \text{and}\ \
\red(T):=\left\{g\in G\mid T(g)\neq g\right\}.
\]
The element $g\in G$ is called a \emph{T-normal form} if $g\in\nf(T)$ or
\emph{T-reducible} if $g\in\red(T)$.

\smallskip

\begin{proposition}\label{prop:ker_im_closed}
  Let $T\in\ROO$. The subspaces $\imm(T)$ and $\ker(T)$ are the closed
  subspaces spanned by $\nff(T)$ and
  $\left\{g-T(g)\mid g\in\redd(T)\right\}$, respectively.
\end{proposition}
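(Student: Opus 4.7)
The plan is to establish the two identities in parallel. Closedness of both $\im(T) = \ker(\mathrm{id} - T)$ and $\ker(T)$ is immediate from continuity of $T$; and the inclusions $\overline{\langle \nf(T)\rangle} \subseteq \im(T)$ and $\overline{W} \subseteq \ker(T)$, where $W := \langle g - T(g) \mid g \in \red(T)\rangle$, follow by taking closures of $\nf(T) \subseteq \im(T)$ and $\{g - T(g) \mid g \in \red(T)\} \subseteq \ker(T)$ respectively (the latter using $T(g - T(g)) = T(g) - T^2(g) = 0$).

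For the reverse inclusion in the kernel case, I would approximate any $v \in \ker(T)$ by the finite truncations $v^{\geq \epsilon} := \sum_{g \in \supp(v),\, d(g) \geq \epsilon} \pi_g(v)\, g \in \K G$, which converge to $v$ as $\epsilon \to 0$. Splitting the finite sum $v^{\geq \epsilon} = u + u'$ according to the partition $G = \nf(T) \sqcup \red(T)$, a direct computation gives
\[
v^{\geq \epsilon} - T(v^{\geq \epsilon}) \;=\; u' - T(u') \;=\; \sum_{g \in \supp(v) \cap \red(T),\, d(g) \geq \epsilon} \pi_g(v)\,(g - T(g)) \;\in\; W,
\]
and continuity of $T$ forces this expression to converge to $v - T(v) = v$, so $v \in \overline{W}$.

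For the reverse inclusion in the image case, I decompose any $v \in \im(T)$ as $v = P(v) + w$, where $P(v) := \sum_{g \in \nf(T) \cap \supp(v)} \pi_g(v)\, g$ is a well-defined element of $\overline{\langle \nf(T)\rangle} \subseteq \im(T)$ and $w := v - P(v) \in \im(T)$ satisfies $\supp(w) \subseteq \red(T)$; the task reduces to proving that any such $w$ must vanish. This is the main obstacle of the proof. I would attack it by assuming $w \neq 0$ and extracting $M := \max_{g \in \supp(w)} d(g)$, which is necessarily finite by Lemma 2.1.3 (since otherwise $\supp(w) \cap \{d \geq M'\}$ would be infinite for some $M'$). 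Choosing $g^*$ to be $\ordred$-maximal in the nonempty finite set $\{g \in \supp(w) \mid d(g) = M\}$ gives $d(g^*) = M$, and rules out the existence of $g \in \supp(w)$ with $g^* \ordred g$ and $d(g) = M$. The identity $\pi_{g^*}(w) = \pi_{g^*}(T(w)) = \sum_{g^* \ordred g,\, g \in \supp(w)} \pi_g(w)\,\pi_{g^*}(T(g))$ is then analysed using the continuity modulus $\gamma > 0$ of $T$ at $M$ (for which $d(g) < \gamma$ implies $\supp(T(g)) \subseteq \{h : d(h) < M\}$, hence $g^* \notin \supp(T(g))$): contributions from $g$ with $d(g) < \gamma$ vanish by continuity, while the finitely many potential contributions from $g$ with $\gamma \leq d(g) < M$ must be eliminated by iterating the $\ordred$-maximality argument across the successive continuity thresholds. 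This produces the required contradiction $\pi_{g^*}(w) = 0 \neq \pi_{g^*}(w)$, forcing $w = 0$ and completing the proof.
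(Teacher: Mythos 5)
Your treatment of the kernel is essentially the paper's argument and is correct: approximate $v \in \ker(T)$ by finite elements, split off the part supported on $\red(T)$, and use $v = v - T(v)$ to exhibit $v$ as a limit of finite linear combinations of the generators $g - T(g)$. The paper does exactly this (with an arbitrary approximating sequence $v_n = u_n + w_n$ rather than your specific truncations, but the identity $v_n - T(v_n) = w_n - T(w_n) \in \K\{g-T(g) \mid g \in \red(T)\}$ is the same observation).

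For the image you diverge from the paper, and this is where a genuine gap appears. The paper simply applies $T$ to the approximating sequence: $T(v_n) = u_n + T(w_n)$ converges to $T(v) = v$ and lies in $\overline{\K\nf(T)}$, so $v \in \overline{\K\nf(T)}$. You instead split $v = P(v) + w$ with $\supp(w) \subseteq \red(T)$, correctly deduce $w \in \im(T)$, and reduce to showing $w = 0$ by an extremal argument. The reduction is sound, but the extremal argument does not close. You choose $g^*$ to be $\ordred$-maximal only in $\{g \in \supp(w) \mid d(g) = M\}$, so $\ordred$-maximality says nothing about $g \in \supp(w)$ with $\gamma \leq d(g) < M$; for such $g$ you cannot rule out $g^* \ordred g$ with $\pi_{g^*}(T(g)) \neq 0$. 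The phrase ``iterating the $\ordred$-maximality argument across the successive continuity thresholds'' names this problem rather than solving it. Note also that the natural fix --- take $g^*$ $\ordred$-maximal in all of the finite set $\supp(w)_{\geq\gamma}$ --- is circular, since then $d(g^*)$ may drop below $M$ and the threshold $\gamma$ supplied by continuity depends on $d(g^*)$. Moreover, in the generality of Section 2.1 $\ordred$ is neither total nor well-founded, so there is no induction principle to fall back on. You need either to supply a complete version of this extremal argument, or to switch to the paper's shorter route of computing $T(v)$ directly as a limit.
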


\begin{proof}
  The set $\nf(T)$ is included in $\im(T)$ and the latter is closed since
  it is the inverse image of $\{0\}$ by the continuous map $\id{V}-T$.
  Hence, the closure $\overline{\K\nf(T)}$ of $\K\nf(T)$ is included in
  $\im(T)$. In the same manner, we show that 
  $\overline{\K\left\{g-T(g)\mid g\in\red(T)\right\}}\subseteq\ker(T)$.
  
  Let us show the converse inclusions. Let $v\in\KG$ and $(v_n)_n$ be a
  sequence in $\K G$ converging to $v$. For every $n\in\N$, there exist
  $u_n\in\K\nf(T)$ and $w_n\in\K\red(T)$ such that $v_n=u_n+w_n$. By
  continuity and linearity of $T$, $T(v)$ is the limit of the sequence
  $(z_n)_n$, where $z_n:=u_n+T(w_n)$. The latter belongs to $\K\nf(T)$.
  If $v\in\im(T)$, we have $v=T(v)$, so that $v=\lim(z_n)$ belongs to
  $\overline{\K\nf(T)}$. If $v\in\ker(T)$, we have $v=v-T(v)$, so that
  $v=\lim(w_n-T(w_n))$ belongs to
  $\overline{\K\left\{g-T(g)\mid g\in\red(T)\right\}}$.
  \newline
\end{proof}
\medskip

The following Lemma is used to prove Proposition~\ref{prop:order_on_RO},
where we prove that reduction operators form a poset.

\smallskip

\begin{lemma}\label{lem:int_lemma_for_order}
  Let $T,\ T'\in\ROO$, $g\in G$ and $v\in\KG$.
  \begin{enumerate}
  \item\label{it:supp_of_RO} If for every $g'\in\suppp(v)$, we have
    $g'<g$, then $g$ does not belong to $\suppp(T(v))$.
  \item\label{it:nf_is_increasing}
    If $\ker(T)\subseteq\ker(T')$, then $\nff(T')\subseteq\nff(T)$.
  \end{enumerate}
  \end{lemma}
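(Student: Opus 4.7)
For part (1), my plan is to use the series representation from Formula~\eqref{equ:series} together with continuity. Write $v$ as the limit of its partial sums $v^{(N)} := \sum_{n\leq N} v_n$, which are finite linear combinations of elements of $\supp(v)$. By linearity, $T(v^{(N)}) = \sum_{g'\in\supp(v^{(N)})} \pi_{g'}(v) T(g')$. For each such $g'$, either $g'\in\nf(T)$, in which case $\supp(T(g')) = \{g'\}$, or $g'\in\red(T)$, in which case every element of $\supp(T(g'))$ is $\ordred g'$ by Definition~\ref{def:reduction_operators}. Using the hypothesis $g' < g$ for all $g'\in\supp(v)$, either way $g\notin\supp(T(g'))$. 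By Lemma~\ref{lem:properties_of_the_support} applied finitely many times, $g\notin\supp(T(v^{(N)}))$, so $\pi_g(T(v^{(N)})) = 0$. Now $T$ and $\pi_g$ are both continuous (by hypothesis and Proposition~\ref{prop:topological_properties}), so $\pi_g(T(v)) = \lim_N \pi_g(T(v^{(N)})) = 0$, which is exactly the statement that $g\notin\supp(T(v))$.

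For part (2), I would argue by contradiction. Suppose $g\in\nf(T')$ but $g\notin\nf(T)$, so $g\in\red(T)$. Since $T$ is a projector, $T(g - T(g)) = T(g) - T^2(g) = 0$, so $g - T(g)\in\ker(T)\subseteq\ker(T')$. This yields $T'(T(g)) = T'(g) = g$, the last equality using $g\in\nf(T')$. On the other hand, since $g\in\red(T)$, every $g'\in\supp(T(g))$ satisfies $g'\ordred g$. Applying part (1) to $T'$ and $v := T(g)$ gives $g\notin\supp(T'(T(g)))$, which contradicts $T'(T(g)) = g$.

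The only delicate step is the passage from the finite to the infinite case in part (1); once that is handled by the continuity of $T$ and of each $\pi_g$, part (2) follows from part (1) together with the projector identity, so no serious obstacle is expected beyond setting up the series expansion carefully.
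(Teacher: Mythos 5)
Your proof is correct and follows essentially the same route as the paper's: part (1) is established by truncating $v$ to finite partial sums, handling the finite case via Definition~\ref{def:reduction_operators} and Lemma~\ref{lem:properties_of_the_support}, and passing to the limit using continuity of $T$ and of $\pi_g$; part (2) reduces to part (1) applied to $T(g)$ via the projector identity. The paper phrases part (2) as a contrapositive (exhibiting $g-T(g)\in\ker(T)\setminus\ker(T')$) rather than a direct contradiction, but the underlying argument is identical.
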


\begin{proof}
  First, we show Point~\ref{it:supp_of_RO}. If $v\in\K G$, the result is
  a consequence of Lemma~\ref{lem:properties_of_the_support}. If $v$ does
  no belong to $\K G$, let $v_n$ as in \eqref{equ:def_vn}, so that $v$ is
  the limit of $(w_n)_n$, where $w_n:=v_1+\cdots+v_n$. Each
  $g'\in\supp(w_n)$ is strictly smaller than $g$, so that
  $g\notin\supp(T(w_n))$. By continuity, $(T(w_n))_n$ converges to
  $T(v)$, so that $g\notin\supp(T(v))$.
  
  Let us show Point~\ref{it:nf_is_increasing} by contrapositive. Let us
  assume that there exists $g\in\nf(T')$ such that $g\notin\nf(T)$. The
  vector $v:=g-T(g)$ belongs to $\ker(T)$ but does not belong to
  $\ker(T')$: otherwise, we would have $g=T'(T(g))$, which is not
  possible from Point~\ref{it:supp_of_RO}.
  \newline
\end{proof}
\medskip

\begin{proposition}\label{prop:order_on_RO}
  The binary relation $\preceq$ on $\ROO$ defined by $T\preceq T'$ if
  $\ker(T')\subseteq\ker(T)$ is an order relation.
\end{proposition}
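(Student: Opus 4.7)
The plan is to verify the three defining properties of a partial order: reflexivity, transitivity, and antisymmetry. Reflexivity ($T\preceq T$) and transitivity (deduced directly from transitivity of set inclusion) are immediate from the definition, so I would dispatch them in a single sentence. The whole content of the Proposition lies in antisymmetry: I must show that $\ker(T)=\ker(T')$ forces $T=T'$ as operators on $\KG$.

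For antisymmetry, my plan is to proceed as follows. From $\ker(T)=\ker(T')$, I apply Lemma~\ref{lem:int_lemma_for_order}, Point~\ref{it:nf_is_increasing} in both directions to get $\nf(T)=\nf(T')$, and consequently $\red(T)=\red(T')$. Next, I use Proposition~\ref{prop:ker_im_closed} to obtain
\[\im(T)=\overline{\K\nf(T)}=\overline{\K\nf(T')}=\im(T').\]
Now I check equality of $T$ and $T'$ on the generating family $G$. For $g\in\nf(T)=\nf(T')$ both operators fix $g$, so $T(g)=g=T'(g)$. For $g\in\red(T)=\red(T')$, the vector $g-T(g)$ lies in $\ker(T)=\ker(T')$, hence $T'(g)=T'(T(g))$; but $T(g)\in\im(T)=\im(T')$ and $T'$ is a projector onto its image, so $T'(T(g))=T(g)$, yielding $T(g)=T'(g)$. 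Thus $T$ and $T'$ coincide on $G$.

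Finally, since $\K G$ is spanned by $G$ and the two operators are linear, they agree on $\K G$; by continuity (part of the definition of a reduction operator) and density of $\K G$ in $\KG$, they agree on all of $\KG$, so $T=T'$.

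The only potentially subtle point is the step where I conclude $T'(T(g))=T(g)$: it relies on $T(g)$ belonging to $\im(T')$, which in turn requires the equality of the two images. That equality is guaranteed as soon as the normal forms coincide, thanks to the closed-span description of $\im(T)$ supplied by Proposition~\ref{prop:ker_im_closed}. I expect no real obstacle beyond carefully chaining these two preceding results and remembering to invoke continuity at the end when passing from $\K G$ to $\KG$.
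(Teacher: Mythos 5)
Your proof is correct and follows exactly the same chain as the paper's: kernel equality gives $\nff(T)=\nff(T')$ via Lemma~\ref{lem:int_lemma_for_order}, which gives $\imm(T)=\imm(T')$ via Proposition~\ref{prop:ker_im_closed}, and then the two projectors must coincide. The only difference is that the paper compresses your final verification into the single remark that a projector is determined by its kernel and image; since $\KG=\ker(T)\oplus\imm(T)$ holds algebraically for any linear idempotent, the detour through checking agreement on $G$ and then invoking linearity, density, and continuity, while perfectly valid, is not actually needed.
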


\begin{proof}
  The relation $\preceq$ is reflexive and transitive. Moreover,
  $\ker(T)=\ker(T')$ implies $\nf(T)=\nf(T')$ from
  Point~\ref{it:nf_is_increasing} of Lemma~\ref{lem:int_lemma_for_order}.
  From Proposition~\ref{prop:ker_im_closed}, $\nf(T)=\nf(T')$ implies
  $\im(T)=\im(T')$. Hence, $T$ and $T'$ are two projectors with same
  kernels and images, so that they are equal and $\preceq$ is
  antisymmetric.
  \newline
\end{proof}
\medskip

\noindent
{\large\textbf{2.2. Elimination maps}}
\addcontentsline{toc}{subsection}{2.2. Elimination maps}
\setcounter{subsection}{2}
\setcounter{theorem}{0}
\bigskip

\noindent
From Propositions~\ref{prop:ker_im_closed} and \ref{prop:order_on_RO},
the kernel map induces an injection of $\RO$ into closed subspaces of
$\KG$. In this Section, we introduce a sufficient condition such that
this injection is surjective. Moreover, we deduce a lattice structure on
reduction operators in Theorem~\ref{thm:lattice_structure}. 
\medskip

Throughout the Section, we assume that $\ordred$ is a total order and $d$
is an elimination map, where this notion is introduced in the following
definition. Before, recall that $\G{n}$ denotes elements $g\in G$ such
that $1/n\leq d(g)<1/(n-1)$.
\smallskip

\begin{definition}\label{def:elimination_map}
  We say that $d$ is an \emph{elimination map} with respect to
  $(G,\ordred)$ if it is non-decreasing and if the sets $\G{n}$ equipped
  with the order induced by $\ordred$ are well-ordered sets.
\end{definition}
\medskip

Under these hypotheses, we may introduce the notion of leading monomial.
The order $\ordred$ being total, for every $v\in\K G$, there exists a
greatest element in the support of $v$, written $\max(\supp(v))$. For
$v\in\KG$, we let $v=\sum v_n$ as in \eqref{equ:series} and we denote by
$n_0$ the smallest $n$ such that $v_n\neq 0$. The map $d$ being
non-decreasing, $\max(\supp(v_{n_0}))$ is the greatest element of
$\supp(v)$. We let
\smallskip
\[
\lm(v):=\max(\supp(v_{n_0}))\ \ \text{and}\ \ \lc(v)=\pi_{\lm(v)}(v).
\]
The elements $\lm(v)\in G$ and $\lc(v)\in\K$ are respectively called the
\emph{leading monomial} and the \emph{leading coefficient} of $v$.
\medskip

The construction of the inverse of $\ker$ is based on a property of
leading monomials of closed subspaces presented in
Proposition~\ref{prop:construction_elimination_vectors}. In order to show
the latter, we need the following intermediate lemma.

\begin{lemma}\label{lem:truncated_reduced_family}
  Let $V$ be a subspace of $\KG$ and let $n$ be an integer. There exists
  a family $(v_g)_{g\in\G{n}}\subset V$ such that the following hold:
  \begin{enumerate}
  \item[$\bullet$] $v_g=0$ if and only if there is no $v\in V$ such that
    $\lmm(v)=g$;
  \item[$\bullet$] if $v_g\neq 0$, then $\lmm(v_g)=g$, $\lcc(v_g)=1$ and
    for $g'\neq g$ in $\G{n}$, $g\notin\suppp(v_{g'})$.
  \end{enumerate}
\end{lemma}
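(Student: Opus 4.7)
My plan is a Gaussian-elimination-style construction carried out by transfinite recursion on $(\G{n},\ordred)$. I start by setting $H := \{g \in \G{n} \mid \exists v \in V, \, \lm(v) = g\}$ and defining $v_g := 0$ for every $g \in \G{n} \setminus H$; for such $g$, the first bullet holds by definition and the second is vacuous. For $g \in H$, I construct $v_g$ by transfinite induction on $H$, viewed as a subset of the well-ordered set $\G{n}$; the inductive hypothesis will be that every previously constructed $v_{g'}$ satisfies $\lm(v_{g'}) = g'$, $\lc(v_{g'}) = 1$, and $\pi_h(v_{g'}) = 0$ for every $h \in H$ with $h < g'$.

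At stage $g$, I pick $v \in V$ with $\lm(v) = g$ and set $w := \lc(v)^{-1} v \in V$. The crucial finiteness observation is that $\G{n} \subseteq \{h \in G \mid d(h) \geq 1/n\}$, so $\supp(w) \cap \G{n}$ is contained in the finite set $\supp(w)_{\geq 1/n}$ provided by Lemma~\ref{lem:properties_of_the_support}. In particular, $F := \supp(w) \cap H \cap \{g' : g' < g\}$ is finite, and I set
\[ v_g := w - \sum_{g' \in F} \pi_{g'}(w)\, v_{g'}, \]
which is a finite linear combination of elements of $V$ and therefore belongs to $V$.

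For the verification, each $v_{g'}$ with $g' \in F$ has $\lm(v_{g'}) = g' < g$, so the subtraction does not touch any coordinate at or above $g$ inside $\G{n}$; this yields $\lm(v_g) = g$ and $\lc(v_g) = 1$. For $h \in H$ with $h < g$, I expand
\[ \pi_h(v_g) = \pi_h(w) - \sum_{g' \in F} \pi_{g'}(w)\, \pi_h(v_{g'}). \]
A short case analysis shows that $\pi_h(v_{g'}) = 0$ for every $g' \in F$ with $g' \neq h$: either by the inductive hypothesis if $h < g'$, or because $\lm(v_{g'}) = g' < h$ forces $h \notin \supp(v_{g'})$ when $h > g'$. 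Hence the sum collapses to $\pi_h(w)\,\pi_h(v_h) = \pi_h(w)$ when $h \in F$, cancelling the first term; and when $h \notin F$, necessarily $\pi_h(w) = 0$, so the whole right-hand side is already zero. The symmetric half of the second bullet, namely $g \notin \supp(v_{g'})$ for $g' \in H$ with $g' > g$, will be furnished by the same inductive hypothesis at the later stage that constructs $v_{g'}$.

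The main obstacle is the finiteness of the reduction sum, which relies essentially on $d$ being non-decreasing together with the well-ordering of each $\G{n}$: without these hypotheses, the natural reduction sum would in general be indexed by an infinite set and convergence in $\KG$ would have to be argued separately via topological control on the supports of the $v_{g'}$. The well-ordering of $\G{n}$ also supplies the transfinite recursion principle on $H$ used throughout.
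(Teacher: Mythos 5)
Your proof is correct and follows essentially the same Gaussian-elimination-by-transfinite-recursion construction as the paper's proof, with the paper setting $v_g = v - \sum_{g' < g,\ g' \in \G{n}} \pi_{g'}(v)\,v_{g'}$ and leaving the finiteness of the sum implicit. You make that finiteness explicit via Lemma~\ref{lem:properties_of_the_support} and spell out the verification of the orthogonality condition, but the underlying argument is the same.
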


\begin{proof}
  We proceed by induction along the well-founded order $\ordred$: assume
  that for every $g'\ordred g$ in $\G{n}$, $v_{g'}$ is constructed. If
  there is no $v\in V$ such that $\lm(v)=g$, we let $v_g=0$. Else, we
  choose such a $v$ with $\lc(v)=1$ and we let
  $v_g=v-\sum\pi_{g'}(v)v_{g'}$, where the sum is taken over $g'\in\G{n}$
  such that $g'\ordred g$.
  \newline
\end{proof}
\medskip

In the statement of
Proposition~\ref{prop:construction_elimination_vectors}, we use the
notion of a \emph{total basis} of a topological vector space $V$, that
is, a free family which spans a dense subspace of $V$.
\smallskip

\begin{proposition}\label{prop:construction_elimination_vectors}
  Let $V$ be a subspace of $\KG$. There exists a family $(v_g)_{g\in G}$
  of $\overline{V}$ such that the following hold:
  \begin{itemize}
  \item[$\bullet$] $v_g=0$ if and only if there is no $v\in V$
    such that $\lmm(v)=g$;

    \smallskip

  \item[$\bullet$] if $v_g\neq 0$, then $\lmm(v_g)=g$, $\lcc(v_g)=1$ and
    $g\notin\suppp(v_{g'})$ for every $g'\neq g$.
  \end{itemize}
  \smallskip
  In particular, nonzero elements of this family form a total basis of
  $\overline{V}$.
\end{proposition}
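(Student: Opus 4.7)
The plan is to reduce to Lemma~\ref{lem:truncated_reduced_family} level by level and then eliminate cross-level leading monomials by a convergent iterative procedure. Let $L := \{g \in G \mid \exists\, v \in V,\ \lm(v) = g\}$ denote the set of leading monomials of elements of $V$, and set $v_g := 0$ for $g \notin L$. For each $n \geq 1$, I apply Lemma~\ref{lem:truncated_reduced_family} to $V$ to obtain a family $(w_g^{(n)})_{g \in \G{n}} \subseteq V$ satisfying the within-$\G{n}$ conditions. For $g \in L$ with $g \in \G{n_g}$, I define $v_g$ as the limit of a sequence $(v_g^{(k)})_k$ in $V$ obtained by setting $v_g^{(0)} := w_g^{(n_g)}$ and
\[
v_g^{(k+1)} := v_g^{(k)} - \sum_{g' \in L \cap \supp(v_g^{(k)}) \cap \G{n_g + k + 1}} \pi_{g'}(v_g^{(k)})\, w_{g'}^{(n_g + k + 1)}.
\]
Each sum is finite by Lemma~\ref{lem:properties_of_the_support}, so $v_g^{(k)}$ remains in $V$; since $v_g^{(k+1)} - v_g^{(k)}$ is supported in $\G{m}$ for $m \geq n_g + k + 1$, one has $\delta(v_g^{(k+1)}, v_g^{(k)}) < 1/(n_g + k)$, so the sequence is Cauchy and its limit $v_g$ belongs to $\overline{V}$.

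I would then verify the three conditions on $v_g$. The identities $\lm(v_g) = g$ and $\lc(v_g) = 1$ follow from the observation that the $\G{m}$-component of $v_g^{(k)}$ for $m \leq n_g + k$ is unaffected by later iterations, so that $(v_g)_{n_g}$ coincides with $(w_g^{(n_g)})_{n_g}$ while the components in lower $\G{m}$ vanish. For the disjointness condition, given $g \in L$ and $g' \in L$ with $g' \neq g$, I distinguish three cases on $n_g$ versus $n_{g'}$: if $n_g > n_{g'}$, then at iteration $k = n_g - n_{g'} - 1$ of the construction of $v_{g'}$, the $g$-coefficient is set to zero and later iterations leave it untouched; if $n_g = n_{g'}$, the $\G{n_g}$-component of $v_{g'}$ equals $(w_{g'}^{(n_g)})_{n_g}$ and the within-$\G{n_g}$ condition of Lemma~\ref{lem:truncated_reduced_family} applies; if $n_g < n_{g'}$, then $d(g) > d(g')$, so by monotonicity of $d$ one has $g > g'$, while $\supp(v_{g'})$ is contained in $\{h \mid h \leq g'\}$.

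For the total basis claim, linear independence follows by evaluating $\pi_{g^*}$ on any nontrivial finite combination $\sum_i \lambda_i v_{g_i}$ with $g^* := \max_i g_i$: by the disjointness property only the $g^*$-term contributes, forcing $\lambda_{g^*} = 0$. Density of the span is the most delicate step, and I expect it to be the main obstacle. Given $v \in V$, I form the iterated reduction $v^{(k+1)} := v^{(k)} - \lc(v^{(k)}) v_{\lm(v^{(k)})}$ in $\overline{V}$. This relies on the fact that any nonzero element of $\overline{V}$ has its leading monomial in $L$, which I would obtain by approximating $v^{(k)}$ by an element of $V$ close enough to share the same leading monomial. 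Within any fixed $\G{n}$, the strictly decreasing sequence of leading monomials of the $v^{(k)}$ must terminate by the well-ordering hypothesis on $\G{n}$, so the process necessarily exits each $\G{n}$ after finitely many steps, forcing $n_0(v^{(k)}) \to \infty$ and $v^{(k)} \to 0$. Hence $v$ is a convergent sum of multiples of the $v_g$, which yields $\overline{V} = \overline{\langle v_g \mid g \in L \rangle}$.
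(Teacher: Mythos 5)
Your construction of the family $(v_g)$ is essentially the paper's: you apply Lemma~\ref{lem:truncated_reduced_family} level by level and iteratively eliminate the $\G{m}$-components for $m>n_g$ by subtracting the truncated-reduced elements at each deeper level, obtaining a Cauchy sequence whose limit lies in $\overline{V}$; the paper's sequence $(v^n_g)_n$ is the same recursion with a slightly different indexing. Your verification of the level-$n_g$ component and the case analysis for the disjointness condition ($n_g>n_{g'}$, $n_g=n_{g'}$, $n_g<n_{g'}$) fills in the step the paper dismisses as ``by construction'', and is correct: the key points are that the level-$n$ contribution from the Lemma family is annihilated within $L\cap\G{n}$ in one pass, that later iterations only touch deeper levels, and that $d$ non-decreasing makes the third case follow from $\lm(v_{g'})=g'$.

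The one place where you genuinely diverge from the paper is the density argument. The paper forms the single finite truncation $v_\epsilon:=\sum_{g\in\supp(v)_{\geq\epsilon}}\pi_g(v)v_g$ and derives $\delta(v,v_\epsilon)<\epsilon$ by a contradiction on the leading monomial of $v-v_\epsilon$, while you run a transfinite Gaussian elimination $v^{(k+1)}:=v^{(k)}-\lc(v^{(k)})\,v_{\lm(v^{(k)})}$ and use the well-ordering of each $\G{n}$ together with monotonicity of $d$ to force the leading levels to escape to infinity. Both arguments rest on the same auxiliary fact $\lm(\overline V)\subseteq L$ (implicitly so in the paper; you isolate it explicitly via the approximation argument, which is clean). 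Your version is somewhat longer but arguably more transparent, since it exhibits $v$ directly as a convergent series in the $v_g$ rather than only as a limit of truncations; the paper's version is shorter because it handles one $\epsilon$-threshold at a time. Either way, the proof is correct.
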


\begin{proof}
  For every $g\in G$, let us define by induction the sequence $(v^n_g)_n$
  as follows: $v^n_g=0$ if $d(g)<1/n$, $v^n_g$ is chosen as in
  Lemma~\ref{lem:truncated_reduced_family} if $g\in\G{n}$, that is, if
  $1/n\leq d(g)<1/(n-1)$, and
  \[v^n_g=v^{n-1}_g-\sum_{g'\in\G{n}}\pi_{g'}(v^{n-1}_g)v^n_{g'},\]
  if $d(g)\geq 1/(n-1)$. For every $n,\ m$ and $g$, $\lm(v^n_g-v^m_g)$
  belongs to $\G{k}$, for $k\geq\min(n,m)$. Hence, we have
  $\delta(v^n_g,v^m_g)<1/\min(n,m)$, so that $(v^n_g)_n$ is Cauchy and
  converges to $v_g\in\overline{V}$. By construction, $(v_g)_g$ satisfies
  the first two properties stated in the Proposition. Let us show that
  its nonzero elements form a total basis. They form a free family since
  the leading monomials of its elements are pairwise distinct. Moreover,
  this family is dense since for every $v\in\overline{V}$ and every
  $\epsilon >0$, the element
  \[v_{\epsilon}:=\sum_{g\in\supp(v)_{\geq\epsilon}}\pi_g(v)v_g,\]
  $\supp(v)_{\geq\epsilon}$ being defined as in \eqref{equ:supp_epsilon},
  belongs to $\B(v,\epsilon)$. Indeed, assume by contradiction that there
  exists $g'\in\supp(v-v_{\epsilon})$ such that $d(g')\geq\epsilon$. 
  There exists $g\in\supp(v)_{\geq\epsilon}$ such that $g'\in\supp(v_g)$
  but $g'\neq g$. The maximal $g'$ for $\ordred$ is the leading monomial
  of $v-v_{\epsilon}\in V$, so that $v_{g'}\neq 0$. This is a
  contradiction since this condition implies $g'\notin\supp(v_g)$.
  \newline
\end{proof}
\medskip

With the notations of the previous Proposition, the elements $v_g$ are
limits of sequences $(v^n_g)_n$ of elements of $V$ such that
$\lm(v^n_g)=g$. The family of nonzero $v_g$'s being total, for every
$v\in\overline{V}$, $\lm(v)=g$ for some $v_g$, so that it belongs to
$\lm(V):=\{\lm(u)\mid u\in V\}$. Hence, the following formula holds:

\begin{equation}\label{equ:leading_monomials}
  \lm(V)=\lm(\overline{V}).
\end{equation}
\smallskip

We can now introduce the main result of this Section.
\smallskip

\begin{theorem}\label{thm:lattice_structure}
  Assume that $\ordredd$ is a total order and that $d$ is an elimination
  map with respect to $(G,\ordredd)$. The kernel map induces a bijection
  between $\ROO$ and closed subspaces of $\KG$. In particular, $\ROO$
  admits lattice operations $\left(\preceq,\wedge,\vee\right)$, defined
  by
  \smallskip
  \begin{itemize}
  \item[$\bullet$] $T_1\preceq T_2$ if $\ker(T_2)\subseteq\ker(T_1)$;
    \smallskip
  \item[$\bullet$] $T_1\wedge T_2:= \ker^{-1}\left(\overline{\ker(T_1)+
  \ker(T_2)}\right)$;
    \smallskip
  \item[$\bullet$] $T_1\vee T_2:=\ker^{-1}\left(\ker(T_1)\cap\ker(T_2)
    \right)$.
  \end{itemize}
\end{theorem}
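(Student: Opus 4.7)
The plan is to reduce the theorem to a single construction: given a closed subspace $W\subseteq\KG$, build a reduction operator $T$ with $\ker(T)=W$. Injectivity of the kernel map is already established in Proposition~\ref{prop:order_on_RO}, so only surjectivity is at stake. Once the bijection is in place, the lattice operations on $\ROG$ follow formally from the lattice of closed subspaces of $\KG$ with inclusion reversed: the closed sum $\overline{\ker(T_1)+\ker(T_2)}$ is the smallest closed subspace containing both kernels, hence corresponds under $\ker^{-1}$ to the infimum of $T_1$ and $T_2$ for $\preceq$, while $\ker(T_1)\cap\ker(T_2)$ is closed and yields their supremum.

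To carry out the construction, I would apply Proposition~\ref{prop:construction_elimination_vectors} to $V=W$ to obtain a family $(v_g)_{g\in G}\subseteq W$ whose nonzero elements form a total basis of $W$. Setting $\red(T):=\lm(W)=\{g\mid v_g\neq 0\}$ and $\nf(T):=G\setminus\red(T)$, define $T$ on monomials by $T(g):=g$ if $g\in\nf(T)$ and $T(g):=g-v_g$ if $g\in\red(T)$. Two consequences of Proposition~\ref{prop:construction_elimination_vectors} are then used: since $\lm(v_g)=g$ with leading coefficient $1$, the support of $T(g)=g-v_g$ only contains monomials strictly smaller than $g$ for $\ordred$, which is the axiom of Definition~\ref{def:reduction_operators}; and since $g'\notin\supp(v_g)$ for all $g'\neq g$ in $\red(T)$, one has $\supp(T(g))\subseteq\nf(T)$, so that every $T(g)$ lies in $\overline{\K\nf(T)}$.

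The next step is to extend $T$ from monomials to a continuous linear projector on $\KG$. Because $d$ is non-decreasing along $\ordred$, the inclusion of supports above gives $\delta(T(g),0)\leq d(g)=\delta(g,0)$; combining this with the ultrametric behaviour of $\delta$ yields $\delta(T(w),0)\leq\delta(w,0)$ for every $w\in\K G$. Hence $T$ is $1$-Lipschitz on the dense subspace $\K G$ and extends uniquely to a continuous linear map $T:\KG\to\KG$. For the projector property, $T$ is the identity on $\K\nf(T)$ by construction, and by continuity on $\overline{\K\nf(T)}$; since every $T(g)$ belongs to this closure, $T^2(g)=T(g)$ on monomials, then on $\K G$ by linearity and on $\KG$ by continuity and density.

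Finally, the identification $\ker(T)=W$ follows from Proposition~\ref{prop:ker_im_closed}: one has $\ker(T)=\overline{\K\{g-T(g)\mid g\in\red(T)\}}=\overline{\K\{v_g\mid g\in\lm(W)\}}=W$, where the last equality uses the total-basis property of the $v_g$'s together with closedness of $W$. The main obstacle I anticipate is purely technical but genuine: the vectors $T(g)=g-v_g$ are in general formal series lying in $\KG\setminus\K G$, so neither the projector identity nor the kernel computation can be handled by elementary manipulations on finite sums; each step must be reduced to a monomial-level statement and then transported to $\KG$ through the continuity of $T$ and the density of $\K G$, and care must be taken that the ultrametric inequality, rather than an ordinary triangle inequality, is what makes the $1$-Lipschitz estimate work.
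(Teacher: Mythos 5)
Your proposal is correct and follows essentially the same route as the paper: reduce to surjectivity of $\ker$, build the total basis $(v_g)$ via Proposition~\ref{prop:construction_elimination_vectors}, define $T$ on monomials and extend by the $1$-Lipschitz estimate, then identify $\ker(T)$ via Proposition~\ref{prop:ker_im_closed}. Your formula $T(g)=g-v_g$ is the right one (the paper writes $T(g)=v_g-g$, a sign slip, since it is $g-T(g)$ that must equal $v_g$), and your remarks on extending from $\K G$ to $\KG$ by density and on verifying $T^2=T$ make explicit steps the paper leaves implicit.
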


\begin{proof}
  It is sufficient to show that $\ker$ is surjective. Consider a closed
  subspace $V$ of $\KG$ and let $(v_g)_g$ as in
  Proposition~\ref{prop:construction_elimination_vectors}. Consider the
  linear map $T:\KG\to\KG$ defined by $T(g)=v_g-g$ if $v_g\neq 0$ and
  $T(g)=g$, otherwise. This map defines a linear projector, compatible
  with $\ordred$ and is continuous at $0$ since for every $v\in\KG$, we
  have $\delta(T(v),0)\leq\delta(v,0)$. Hence, $T$ is continuous, so that
  it is a reduction operator. Moreover, the set of nonzero $v_g$'s is a
  total basis of $\ker(T)$ and $V$ from
  Propositions~\ref{prop:ker_im_closed}
  and~\ref{prop:construction_elimination_vectors}, so that we have
  $\ker(T)=V$.
  \newline
\end{proof}
\medskip

\begin{example}\label{ex:polynomials}
  We assume that $0$ does not belong to the closure of $\im(d)$, for
  instance, $d$ is constant equal to $1$. We fix a well-order $<$ on $G$
  and we choose $\ordred$ equal to $<$, so that $d$ is an elimination
  map. Moreover, the topology induced by $\delta$ is the discrete
  topology, so that every subspace is closed. Hence, we recover a result
  from ~\cite{MR3673007}: $\ker$ induces a bijection between subspaces of
  $\K G$ and reduction operators. The interpretations of the lattice
  operations on $\RO$ were given in~\cite[Proposition 2.3.6]{MR3673007}
  and~\cite[Proposition 2.1.4]{MR3850567}: $T_1\wedge T_2$ characterises
  the equivalence relation on $\K G$ induced by the binary relation
  $v\to T_i(v)$, $v\in\K G$ and $i=1,2$, and $\ker(T_1\vee T_2)$ is
  isomorphic to the space of syzygies for $T_1$ and~$T_2$.
\end{example}

\begin{example}\label{ex:formal_series}
  Assume that $G$ is infinite and equipped with a well-order $<$ such that
  $d$ is strictly decreasing. In particular, $0$ belongs to the closure of
  $\im(d)$. For $\ordred$, we choose the opposite order of $<$, that is,
  $g\ordred g'$, whenever $g'<g$. Hence, $d$ is an elimination map, $\KG$
  is the set of formal series as in~\eqref{equ:series}, where the sum
  may be infinite, and $\ker$ induces a bijection between closed subspaces
  of formal series and reduction operators. We point out that there exist
  subspaces which are not closed, as illustrated in the following
  Proposition.
\end{example}
\smallskip

\begin{proposition}\label{prop:non_closed_dense_subspace_formal_series}
  Let $G:=\{g_1<g_2<\cdots\}$ be a countable well-ordered set and
  $d:G\to\R_{>0}$, defined by $d(g_n)=1/n$. The subspace
  $V:=\K\{g_n-g_{n+1}\mid n\geq 1\}\subset\KG$ is dense and different
  from $\KG$.
\end{proposition}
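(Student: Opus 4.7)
The plan is to treat the two claims separately.

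For $V\neq\KG$, the cleanest route is to observe that $V$ is by definition a $\K$-linear span, hence consists of finite combinations, so $V\subseteq\K G$. But Proposition~\ref{prop:topology_for_polynomials} forces $\K G\subsetneq\KG$ here, since $d(g_n)=1/n$ puts $0$ in $\overline{\im(d)}$. Consequently every series with infinite support, for instance the limit in $\KG$ of the Cauchy sequence $(g_1+g_2+\cdots+g_n)_n$, lies outside $V$. (An alternative, more elementary, argument is that the ``sum of coefficients'' linear functional on $\K G$ vanishes on every generator $g_n-g_{n+1}$, hence on all of $V$, but not on $g_1$.)

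For density, the engine is the telescoping identity
\begin{equation*}
  g_n - g_{n+N} \;=\; \sum_{k=n}^{n+N-1}(g_k-g_{k+1})\;\in\; V,\qquad n,N\geq 1.
\end{equation*}
Given $v\in\KG$ and $\epsilon>0$, I would reduce the approximation to two steps. First, truncate: Lemma~\ref{lem:properties_of_the_support} gives that $\supp(v)_{\geq\epsilon/2}$ is finite, so the element $v_\epsilon:=\sum_{g\in\supp(v)_{\geq\epsilon/2}}\pi_g(v)g$ lies in $\K G$, and every $g\in\supp(v-v_\epsilon)$ has $d(g)<\epsilon/2$, giving $\delta(v,v_\epsilon)<\epsilon/2$. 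Second, writing $v_\epsilon=\sum_{i=1}^k\lambda_i g_{n_i}$, choose $N$ large enough so that $1/(n_i+N)<\epsilon/2$ for every $i$ and set $u:=\sum_{i=1}^k\lambda_i(g_{n_i}-g_{n_i+N})\in V$. Then $v_\epsilon-u=\sum_i\lambda_i g_{n_i+N}$ has all support elements of $d$-value below $\epsilon/2$, so $\delta(v_\epsilon,u)<\epsilon/2$, and the triangle inequality yields $\delta(v,u)<\epsilon$.

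The mildly delicate point is the first step: when $v\notin\K G$, the set $\supp(v-v_\epsilon)$ may be infinite, so the ``$\max$'' in~\eqref{equ:metric} has to be read as the supremum of the natural extension of $\delta$ to the completion. Here this is harmless because $d$ only takes the discrete values $\{1/n:n\geq 1\}$ and the tail support lies in $\{g_n:n>2/\epsilon\}$, so the supremum is attained and strictly below $\epsilon/2$. Everything else is routine.
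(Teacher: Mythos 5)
Your proof is correct and follows essentially the same route as the paper: the telescoping identity $g_n - g_{n+N}=\sum_{k=n}^{n+N-1}(g_k-g_{k+1})\in V$ is the key to density (the paper phrases this as $g_n=\lim_k(g_n-g_k)$, implicitly combined with density of $\K G$ in $\KG$, whereas you make the truncation step explicit), and your observation that $V\subseteq\K G\subsetneq\KG$ (together with the sum-of-coefficients functional you offer as an alternative) matches the paper's one-line remark that $g_n\notin V$.
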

\smallskip

\begin{proof}
  For every $n\geq 1$, $g_n$ does not belong to $V$ but is equal to
  $\lim(g_n-g_k)_k$.
  \newline
\end{proof}
\bigskip

\begin{center}
  \bf\large 3. CONFLUENCE AND DUALITY
  \addcontentsline{toc}{section}{3. Confluence and duality}
  \setcounter{section}{3}
\end{center}

\smallskip

In this Section, we investigate the rewriting and duality properties of
reduction operators.

\bigskip

\noindent
{\large\textbf{3.1. Topological confluence}}
\addcontentsline{toc}{subsection}{3.1. Topological confluence}
\setcounter{subsection}{1}
\setcounter{theorem}{0}
\bigskip

Throughout this Section, we fix a set $F\subseteq\RO$, where $d$ is an
elimination map with respect to $(G,\ordred)$. Our objective is to
introduce a confluence-like property for $F$. First, we recall
from~\cite{MR1629216} classical notions of rewriting theory and introduce
a topological adaptation of rewriting systems.
\medskip

An \emph{abstract rewriting system} is a pair
$(A,\to)$, where $A$ is a set and $\to$ is a binary relation on $A$. We
write $a\to b$ instead of $(a,b)\in\to$. We denote by $\rewRef$ and
$\rewTransSym$ the symmetric and the reflexive transitive closures of
$\to$, respectively. Hence, $\rewEquiv$ is the reflexive transitive
symmetric closure of $\to$. If $a\rewTransSym b$, we say that $a$
\emph{rewrites} into $b$. We say that $a$ and $b$ are \emph{joinable} if
there exists $c$ such that both $a$ and $b$ rewrite into $c$. We say that
$\to$ is \emph{confluent} if whenever $a$ rewrites into $b$ and $c$, then
$b$ and $c$ are joinable.
\smallskip

A \emph{topological rewriting system} is a triple $(A,\tau,\to)$, where
$(A,\tau)$ is a topological space and $\to$ is a rewriting relation on
$A$. We denote by $\rewTop$ the topological closure of $\to$ for the
product topology $\discTop{A}\times\tau$, where $\discTop{A}$ is the
discrete topology on $A$. In other words, we have $a\rewTop b$ if and
only if every neighbourhood $V$ of $b$ contains $b'$ such that $a$
rewrites into $b'$. By this approach, we formulate the following
topological notion of confluence.
\smallskip

\begin{definition}\label{def:topo_conf}
  The rewriting relation $\to$ is $\tau$-\emph{confluent} if whenever $a$
  rewrites into $b$ and $c$, then there exists $d$ such that $b\rewTop d$
  and $c\rewTop d$.
\end{definition}
\noindent
Notice that if $\tau=\discTop{A}$, then $\tau$-confluence is equivalent
to confluence.
\medskip

We equip $\KG$ with the topology induced by $\delta$ and we associate to
$F$ the topological rewriting system $(\KG,\delta,\rewF)$ defined by
\[v\rewF T(v),\ \forall v\in\KG,\ T\in F.\]
Moreover, we consider the operator $\wedge F\in\RO$ and the set
$\nf(F)\subseteq G$ defined as follows:
\[\wedge F:=\ker^{-1}\left(\overline{\sum_{T\in F}\ker(T)}\right),\
\nf(F):=\bigcap_{T\in F}\nf(T).\]
From Point~\ref{it:nf_is_increasing} of
Lemma~\ref{lem:int_lemma_for_order}, $\nf(\wedge F)$ is included in
$\nf(F)$, and we let
\[
\obs:=\nf(F)\setminus\nf(\wedge F).
\]
The elements of $\obs$ are called the \emph{obstructions} of $F$.
\smallskip

\begin{definition}\label{def:confluence_for_RO}
  A set $F\subseteq\textbf{RO}(G,\ \ordred,\ d)$ is said to be
  \emph{confluent} if $\obs=\emptyset$.
\end{definition}
\medskip

In Theorem~\ref{thm:lattice_confluence}, we show that $F$ is confluent if
and only if it induces a $\delta$-confluent rewriting relation. For that,
we need the following intermediate definition and results.
\smallskip

\begin{definition}
  Let $v\in\ker(\wedge F)$. A decomposition
  $v=\sum_{i=0}^n\lambda_i(g_i-T_i(g_i))+r$, where $\lambda_i\neq 0$,
  $T_i\in F$ and $r\in\ker(\wedge F)$, of $v$ is said to be
  \emph{admissible} if $g_i\leq\lm(v)$, for every $1\leq i\leq n$, and
   $\lm(r)<\lm(v)$.
\end{definition}

The following lemma establishes the link between the confluence property
and reduction of $S$-polynomials into zero in our framework.
\smallskip

\begin{lemma}\label{lem:S-pol}
  If $\rewF$ is $\delta$-confluent, then for $g\in G$ and $T,T'\in F$,
  $(T-T')(g)\rewTop_F0$ and $(T-T')(g)$ admits an admissible
  decomposition.
\end{lemma}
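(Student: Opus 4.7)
The plan is to apply the $\delta$-confluence of $\rewF$ to the two one-step rewritings $g\rewF T(g)$ and $g\rewF T'(g)$, which yields some $d_{\ast}\in\KG$ with $T(g)\rewTop_F d_{\ast}$ and $T'(g)\rewTop_F d_{\ast}$. Unfolding the definition of $\rewTop_F$, for every $\epsilon>0$ I can extract finite rewritings $T(g)\to_F^{\ast} u$ and $T'(g)\to_F^{\ast} u'$ with $\delta(u,d_{\ast})<\epsilon$ and $\delta(u',d_{\ast})<\epsilon$, and the ultrametric property of $\delta$ then gives $\delta(u-u',0)<\epsilon$.

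For the first part, $(T-T')(g)\rewTop_F 0$, it suffices to produce, for every $\epsilon>0$, a rewriting $v\to_F^{\ast} w$ of $v:=(T-T')(g)$ with $\delta(w,0)<\epsilon$. Writing $T(g)=a_0\rewF\cdots\rewF a_n=u$ via $a_{i+1}=T_i(a_i)$ with $T_i\in F$, and $T'(g)=b_0\rewF\cdots\rewF b_m=u'$ via $b_{j+1}=T'_j(b_j)$ with $T'_j\in F$, the linearity of reduction operators lets me successively apply $T_1,\ldots,T_n,T'_1,\ldots,T'_m$ to $v$ and still have a valid $\rewF$-rewriting. After $n$ steps one reaches $u-(T_n\cdots T_1)(T'(g))$, and after $m$ further steps one reaches $(T'_m\cdots T'_1)(u)-(T'_m\cdots T'_1\,T_n\cdots T_1)(T'(g))$. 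The main obstacle is to show that this last element lies within $\epsilon$ of $0$: I would combine the non-expansive property $\delta(S(x),S(y))\leq\delta(x,y)$ of each reduction operator (a consequence of being a projector compatible with $\ordred$, as observed in the proof of Theorem~\ref{thm:lattice_structure}) with a further application of $\delta$-confluence---applied to the two $g$-reducts $u$ and $(T_n\cdots T_1)(T'(g))$, or by iterating the interleaving of the two reduction sequences---to bring the two summands arbitrarily close.

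Once $(T-T')(g)\rewTop_F 0$ is established, the admissible decomposition follows immediately. Pick $\epsilon<d(\lm(v))$ and extract a rewriting $v=v_0\rewF\cdots\rewF v_k=w$ with $\delta(w,0)<\epsilon$; then $\lm(w)<\lm(v)$. By Definition~\ref{def:reduction_operators}, $\lm(v_{i+1})\leq\lm(v_i)$ at each step, hence $\lm(v_i)\leq\lm(v)$ throughout. Writing each step as $v_i-v_{i+1}=\sum_{h\in\supp(v_i)\cap\red(S_i)}\pi_h(v_i)(h-S_i(h))$, all involved monomials $h$ satisfy $h\leq\lm(v_i)\leq\lm(v)$. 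Summing and setting $r:=w$ gives a decomposition $v=\sum_{i,j}\lambda_{i,j}(h_{i,j}-S_i(h_{i,j}))+r$; the remainder $r$ lies in $\ker(\wedge F)$ because $v\in\ker(T)+\ker(T')\subseteq\ker(\wedge F)$ and $v-r\in\sum_i\ker(S_i)\subseteq\ker(\wedge F)$, and $\lm(r)<\lm(v)$, which makes the decomposition admissible.
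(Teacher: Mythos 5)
Your derivation of the admissible decomposition from $(T-T')(g)\rewTop_F 0$ is correct and essentially matches the paper's own argument: along a rewriting $v=v_0\rewF\cdots\rewF v_k=w$ with $\delta(w,0)<d(\lm(v))$, each difference $v_i-v_{i+1}$ is a combination of elements $h-S_i(h)$ with $h\leq\lm(v_i)\leq\lm(v)$, and the remainder $r=w$ lies in $\ker(\wedge F)$ with $\lm(r)<\lm(v)$.

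The first part, however, has a genuine gap which you acknowledge but do not close. Your interleaving scheme applies a single reduction chain $A:=T'_m\cdots T'_1T_n\cdots T_1$ to $v=T(g)-T'(g)$, producing $A\bigl(T(g)\bigr)-A\bigl(T'(g)\bigr)$. The $\delta$-confluence hypothesis gives two \emph{separate} finite chains, one sending $T(g)$ near the common target and another sending $T'(g)$ near it; it says nothing about the effect of the first chain on $T'(g)$, so nothing forces $A\bigl(T'(g)\bigr)$ to be close to $A\bigl(T(g)\bigr)$. Non-expansiveness of reduction operators only ensures the distance between the two summands does not grow under further reduction, it does not make it shrink. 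And re-invoking $\delta$-confluence on the new pair of $g$-reducts $A\bigl(T(g)\bigr)$, $A\bigl(T'(g)\bigr)$ reproduces exactly the same mismatch at the next level: you again obtain two chains that need not agree, and the recursion shows no visible descent.

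The paper resolves this with a descent invariant that your scheme lacks. At stage $n$ one builds, by induction on the well-ordered set $\G{n+1}$, a finite composition $R_{n+1}$ of elements of $F$ such that $\supp\bigl(R_{n+1}(T(g))\bigr)$ and $\supp\bigl(R_{n+1}(T'(g))\bigr)$ restricted to $d\geq 1/(n+1)$ lie in $\nf(F)$. Because $d$ is non-decreasing along $\ordred$, any reduction operator applied afterwards can only act on monomials with $d<1/(n+1)$ and only produce smaller ones, so the coefficients above the threshold are frozen under all further rewriting. Then $\delta$-confluence forces those frozen coefficients of the two reducts to coincide, giving $\delta\bigl(R_{n+1}(T-T')(g),0\bigr)<1/(n+1)$. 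It is this freezing above an ever-smaller threshold, not proximity to a floating common target, that makes the induction converge.
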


\begin{proof}
  Let us show the first part of the lemma. Assume by induction that for
  every $n\geq 0$, a finite composition $R_n$ of elements of $F$ has been
  constructed such that $\delta(R_n(T-T')(g),0)< 1/n$. By induction on
  the well-ordered set $\G{n+1}$, there exists a finite composition $R$
  of elements of $F$ such that
  \begin{equation}\label{equ:supp_condition}
    \supp(R\circ R_n(T(g))_{\geq 1/(n+1)}\cap
    \supp(R\circ R_n(T'(g))_{\geq 1/(n+1)}\subseteq\nf(F).
  \end{equation}
  Letting $R_{n+1}:=R\circ R_n$, $g$ rewrites into $v:=R_{n+1}(T(g))$ and
  $v':=R_{n+1}(T'(g))$. By $\delta$-confluence, there exists $w$ such
  that $v\rewTop w$ and $v'\rewTop w$, and from
  \eqref{equ:supp_condition},  each $g'\in G$ with $d(g')\geq n+1$ of
  $\supp(v)$ and $\supp(v')$ are normal forms. Hence,
  $\pi_{g'}(v)=\pi_{g'}(v')$, so that $\delta(R_n(T-T')(g),0)< 1/(n+1)$.
  Hence, $(T-T')(g)\rewTop_F0$.

  The second part of the lemma is a consequence of the the first one and
  the following fact: if $v\rewTop_F0$, then $v$ admits an admissible
  decomposition. Indeed, for every $n$, we let $v=(v-R_n(v))+R_n(v)$,
  where $R_n$ is a finite composition of elements of $F$ such that
  $\delta(R_n(v),0)<1/n$. Then, $v-R_n(v)$ is a linear combination of
  elements of the form $g-T(g)$, with $g\leq\lm(v)$. Moreover, for $n$
  large enough, we have $\lm(R_n(v))<\lm(v)$, so that $v$ admits an
  admissible decomposition.
\end{proof}

\begin{proposition}\label{prop:admissible_decompo}
  If $\rewF$ is $\delta$-confluent, then every $v\in\ker(\wedge F)$
  admits an admissible decomposition.
\end{proposition}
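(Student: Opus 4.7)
The proof plan is to combine approximation of $v$ by finite elements of $\sum_{T\in F}\ker(T)$ with iterative S-polynomial reductions supplied by Lemma~\ref{lem:S-pol}. Set $g_0:=\lm(v)\in\G{n_0}$.

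First, by density, I would pick a finite sum $w=\sum_i\mu_i(g_i-T_i(g_i))$ in $\sum_{T\in F}\ker(T)$ with $\delta(v,w)<d(g_0)$, so that $r_0:=v-w\in\ker(\wedge F)$ satisfies $\lm(r_0)<g_0$. The sum $w$ may carry ``excess'' terms with $g_i>g_0$; let $h$ denote the largest such. From $\pi_h(v)=0=\pi_h(r_0)$ one deduces $\pi_h(w)=0$, whence $\sum_{i:\,g_i=h,\,h\in\red(T_i)}\mu_i=0$. Fixing an index $p$ with $g_p=h$ and $\mu_p\neq 0$, and using $\mu_p=-\sum_{j\neq p}\mu_j$, the S-polynomial identity
\[
\sum_{g_i=h}\mu_i\bigl(h-T_i(h)\bigr)=\sum_{j\neq p}\mu_j\bigl(T_p-T_j\bigr)(h)
\]
lets me invoke Lemma~\ref{lem:S-pol}, which supplies an admissible decomposition of each $(T_p-T_j)(h)$ whose monomials lie strictly below $h$. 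Substituting these back produces a new decomposition of $v$ whose largest excess is strictly less than $h$, the admissible remainders being absorbed into the remainder.

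Second, this iteration terminates after finitely many steps. Since $d$ is non-decreasing, any excess $g>g_0$ satisfies $d(g)\geq d(g_0)$, hence lies in the finite union $\G{1}\cup\cdots\cup\G{n_0}$. Along a strictly descending chain in this union the level $n$ is non-decreasing and bounded by $n_0$, so it eventually stabilizes; once stabilized, the chain lies in a single well-ordered $\G{n}$ and must terminate. At termination one has $v=w^*+r^*$ with $w^*=\sum_i\lambda_i(g_i-T_i(g_i))$ finite, every $g_i\leq g_0$, and $\supp(r^*)\subseteq\{g\leq g_0\}$. To secure $\lm(r^*)<g_0$, I would adjoin to $w^*$ a corrective term $\pi_{g_0}(r^*)(g_0-T(g_0))$ for some $T\in F$ with $g_0\in\red(T)$, which kills the $g_0$-coefficient of $r^*$ without violating $g_i\leq g_0$.

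The main obstacle is securing the existence of such a $T$, i.e.\ $g_0\in\red(F)$. Since the leading monomial of any nonzero element of $\ker(\wedge F)$ lies in $\red(\wedge F)$ (a direct support computation from $(\wedge F)(v)=0$), this reduces to ruling out $\obs$ under $\delta$-confluence. I would handle it by contradiction: were $g_0\in\obs$, the element $u:=g_0-(\wedge F)(g_0)\in\ker(\wedge F)$ would satisfy $\lm(u)=g_0$ and be fixed by every $T\in F$, since $T(g_0)=g_0$ and $T$ restricts to the identity on $\im(\wedge F)\subseteq\im(T)$. However, extending Lemma~\ref{lem:S-pol} by induction on the number of summands shows that the finite-sum approximations $u_n\to u$ in $\sum_T\ker(T)$ can be driven arbitrarily close to $0$ by finite compositions of elements of $F$; since $g_0\in\nf(F)$ preserves the $g_0$-coefficient along such reductions and $\pi_{g_0}(u_n)=1$ for $n$ large by continuity of $\pi_{g_0}$, this yields the required contradiction.
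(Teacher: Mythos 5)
Your outline---approximate $v$ by a finite element $w$ of $\sum_{T\in F}\ker(T)$, then strip off the excess monomials one level at a time by rewriting the S-polynomials $(T_p-T_j)(h)$ via Lemma~\ref{lem:S-pol}---runs parallel to the paper's proof, which also peels off the top monomial $g_1$ through S-polynomials (the paper uses a secondary induction on the multiplicity of $g_1$; yours collapses all terms at level $h$ at once, a mild simplification). However, the inductive step has a genuine gap: after replacing the terms at level $h$ and ``absorbing'' the remainders $r'_j$ of the admissible decompositions of $(T_p-T_j)(h)$, the running remainder only satisfies $\lm(r'_j)<h$, which may still exceed the new maximum excess $h_2<h$. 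Your key deduction at the next round, ``$\pi_{h_2}(v)=0=\pi_{h_2}(r)$ hence $\sum_{g_i=h_2}\mu_i=0$,'' therefore breaks, since $\pi_{h_2}(r)$ need not vanish. The fix is to use the stronger fact visible in the proof of Lemma~\ref{lem:S-pol}: the remainder there is $R_n((T_p-T_j)(h))$ with $\delta(R_n(\cdot),0)<1/n$, so its leading monomial can be pushed below any prescribed threshold, in particular below $g_0=\lm(v)$. Keeping every absorbed remainder's leading monomial below $g_0$ restores the coefficient identity at every level $h>g_0$ and, at termination, gives $\lm(r^*)<g_0$ directly.

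With that fix, the whole second half of your argument (the corrective term and the reduction to $g_0\in\red(F)$) is unnecessary: once $\lm(r^*)<g_0$, the equation $\lc(v)=\pi_{g_0}(v)=\pi_{g_0}(w^*)\neq 0$ forces some surviving term to have $g_i=g_0\in\red(T_i)$, so $g_0\in\red(F)$ is a byproduct, exactly as in the paper's proof (where the iteration cannot terminate at all unless $g_1=\lm(v)$ is hit, which requires $\lm(v)\in\red(F)$). As written, moreover, your argument for $g_0\in\red(F)$ contains its own unjustified step: the claim that the approximants $u_n\in\sum_T\ker(T)$ can be driven arbitrarily close to $0$ by finite compositions of elements of $F$ does not follow from Lemma~\ref{lem:S-pol} ``by induction on the number of summands.'' In the inductive step one reduces $w_1+\cdots+w_m$ via $T_m$ to $T_m(w_1+\cdots+w_{m-1})$, which is not covered by the inductive hypothesis; what is actually needed is a topological Church--Rosser property ($a\rewEquiv_F b$ implies a common $\rewTop_F$-reduct under $\delta$-confluence), which is not established in the paper and does not follow from Definition~\ref{def:topo_conf} by the usual zig-zag induction, because $\delta$-confluence only joins two genuine rewriting sequences, not a rewriting sequence with a merely topological reduct.
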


\begin{proof}
  We adapt the proof of \cite[Lemma 4.2]{MR1075338} to our situation.

  By density of $\sum_{T\in F}\ker(T)$ in $\ker(\wedge F)$, $v$ admits a
  decomposition $v=\sum_{i\geq 1}^n\lambda_i(g_i-T_i(g_i))+r$, such that
  $\lm(r)<\lm(v)$. Without lose of generality, assume that $(g_i)_i$ is
  not increasing. If $g_1\leq\lm(v)$, then the chosen decomposition is
  admissible. Otherwise, we show by induction on the greatest $k\geq 2$
  such that $g_k=g_1$, that $v$ admits another decomposition
  \begin{equation}\label{equ:better_decompot}
    \sum_{i=1}^{n'}\lambda'_i(g'_i-T'_i(g'_i))+r',\ g'_i\geq g'_{i+1},\
    g'_1<g_1,\ \lm(r')<\lm(v).
  \end{equation}
  If $k=2$, we have $\lambda_2=-\lambda_1$, so that
  $v=\lambda_1(T_2-T_1)(g_1)+\sum_{i=3}^n\lambda_i(g_i-T_i(g_i))+r$. From
  Lemma~\ref{lem:S-pol}, $(T_1-T_2)(g_1)$ admits an admissible
  decomposition $\sum_{i=1}^{n'}\lambda'_i(g'_i-T'_i(g'_i))+r'$. In
  particular, we have $g'_1<g_1$, so that
  $\sum_{i=1}^{n'}\lambda'_i(g'_i-T'_i(g'_i))+\Big(\sum_{i=3}^n\lambda_i
  (g_i-T_i(g_i))+r+r'\Big)$ is a decomposition of  $v$ such as in
  \eqref{equ:better_decompot}. If $k\geq 3$, we write
  \[v=\lambda_1(T_2-T_1)(g_1)+\left((\lambda_1+\lambda_2)(g_1-T_2(g_1))+
  \sum_{i=3}^k\lambda_i(g_1-T_i(g_1))+
  \sum_{i=k+1}^n\lambda_i(g_i-T_i(g_i))+r\right).\]
  The first term of the sum admits an admissible decomposition from
  Lemma~\ref{lem:S-pol}. Moreover, the element
  $v':=v-\lambda_1(T_2-T_1)(g_1)$ admits a decomposition such as in
  \eqref{equ:better_decompot} by induction hypothesis. The sum of these
  two decomposition gives a decomposition of $v$ such as in
  \eqref{equ:better_decompot}. By applying inductively the decomposition
  \eqref{equ:better_decompot} to $v$, we deduce that it admits an
  admissible decomposition.
\end{proof}

\begin{proposition}\label{prop:intermediate_results_for_confluence}
  Let $v,v'\in\KG$.
  \begin{enumerate}
  \item\label{it:local_equiv} $v-v'\in\ker(\wedge F)$ if and only if for
    every $\epsilon>0$, there exists
    $v_{\epsilon}\in\BB(0,\epsilon)$ such that
    $v\rewEquiv_Fv'+v_{\epsilon}$.
  \item\label{it:CR_lattice} If $F$ is confluent, then
      $v\rewTop_F(\wedge F)(v)$.
  \end{enumerate}
\end{proposition}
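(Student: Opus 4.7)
I will treat the two points separately. Point~\ref{it:local_equiv} hinges on a short bridging observation, while Point~\ref{it:CR_lattice} is proved by a leading-monomial reduction argument that essentially uses the lattice confluence hypothesis.

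For Point~\ref{it:local_equiv}, the bridging observation is: whenever $a-b\in\ker(T)$ for a single $T\in F$, we have $T(a)=T(b)$ and both $a\rewF T(a)$, $b\rewF T(b)$, so $a\rewEquiv_F b$. A telescoping sum extends this to: if $a-b\in\sum_{T\in F}\ker(T)$, then $a\rewEquiv_F b$. For the direct implication, given $v-v'\in\ker(\wedge F)=\overline{\sum_{T\in F}\ker(T)}$, for each $\epsilon>0$ I pick $u\in\sum_{T\in F}\ker(T)$ with $\delta(v-v'-u,0)<\epsilon$ and set $v_\epsilon:=v-v'-u\in\B(0,\epsilon)$; since $v-(v'+v_\epsilon)=u\in\sum_{T\in F}\ker(T)$, the bridging observation gives $v\rewEquiv_F v'+v_\epsilon$. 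Conversely, each elementary step in a zigzag $v\rewEquiv_F v'+v_\epsilon$ contributes a vector of the form $(\id{\KG}-T)(u_i)\in\ker(T)$, so $v-(v'+v_\epsilon)\in\sum_{T\in F}\ker(T)$ and therefore $v-v'\in v_\epsilon+\sum_{T\in F}\ker(T)$; letting $\epsilon\to 0$ forces $v-v'\in\ker(\wedge F)$.

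For Point~\ref{it:CR_lattice}, set $w:=(\wedge F)(v)$, fix $\epsilon>0$, choose $n$ with $1/n<\epsilon$, and let $S:=\bigcup_{m\leq n}\G{m}$. Because $d$ is non-decreasing for $\ordred$, every element of $\G{m+1}$ is strictly $\ordred$-below every element of $\G{m}$, so $S$ is an ordinal sum of the well-ordered sets $\G{m}$ and is therefore well-ordered. I construct inductively finite compositions $R_0:=\id{\KG},R_1,R_2,\dots$ of elements of $F$. If $R_i(v)=w$ or $\lm(R_i(v)-w)\notin S$, I stop. Otherwise, $R_i(v)-w\in\ker(\wedge F)$ (since $v-w\in\ker(\wedge F)$ and, by telescoping, $R_i(v)-v\in\sum_{T\in F}\ker(T)\subseteq\ker(\wedge F)$); the leading monomial of any nonzero element of $\ker(\wedge F)$ lies in $\red(\wedge F)$ by Proposition~\ref{prop:ker_im_closed}, while $\red(\wedge F)=\red(F)$ by the confluence hypothesis. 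So I may pick $T\in F$ with $g:=\lm(R_i(v)-w)\in\red(T)$ and set $R_{i+1}:=T\circ R_i$. Since $w\in\overline{\K\nf(F)}\subseteq\overline{\K\nf(T)}=\im(T)$, we have $T(w)=w$, hence $R_{i+1}(v)-w=T(R_i(v)-w)$. Writing $R_i(v)-w=\alpha g+r$ with $\supp(r)\subseteq\{g'\mid g'\ordred g\}$, the defining property of a reduction operator gives $\supp(T(g))\subseteq\{g'\mid g'\ordred g\}$, and Point~\ref{it:supp_of_RO} of Lemma~\ref{lem:int_lemma_for_order} forces $\supp(T(r))\subseteq\{g'\mid g'\ordred g\}$; hence $\lm(R_{i+1}(v)-w)\ordred g$. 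By well-foundedness of $S$, the loop exits in finitely many steps, and the resulting $R$ satisfies $\delta(R(v),w)<1/n<\epsilon$. Since $R(v)$ is reached from $v$ by a finite $\rewF$-rewriting sequence, this gives $v\rewTop_F w$.

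The main obstacle is the termination step in Point~\ref{it:CR_lattice}: one must simultaneously argue that the running leading monomial of $R_i(v)-w$ is $F$-reducible, which is precisely where the confluence hypothesis $\red(F)=\red(\wedge F)$ is invoked to produce a reducing $T\in F$, and that applying this $T$ strictly decreases the leading monomial in the well-founded order on $S$. The latter rests on $T$ fixing $w$, which follows from $w\in\overline{\K\nf(F)}\subseteq\overline{\K\nf(T)}$, and on Point~\ref{it:supp_of_RO} of Lemma~\ref{lem:int_lemma_for_order} applied to the sub-leading tail of $R_i(v)-w$.
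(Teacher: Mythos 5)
Your proof is correct and follows essentially the same strategy as the paper's, with a cleaner loop invariant in Point~\ref{it:CR_lattice}. For Point~\ref{it:local_equiv}, your ``bridging observation'' (that $a-b\in\ker(T)$ implies $a\rewEquiv_F b$ via the common value $T(a)=T(b)$, and that this telescopes over $\sum_{T\in F}\ker(T)$) plays the same role as the paper's manipulation of the tails $v_k$ of an infinite expansion of $v-v'$; your converse via closedness of $\ker(\wedge F)$ is equivalent to the paper's continuity argument. For Point~\ref{it:CR_lattice}, the paper runs a double induction and maintains the invariant that $\supp(R_n(v))\cap\G{k}\subseteq\nf(\wedge F)$ for $k<n$, whereas you track the single quantity $\lm(R_i(v)-w)$ and show it strictly decreases in the well-ordered set $S=\bigcup_{m\leq n}\G{m}$; this is a somewhat more transparent packaging of the same termination argument, and correctly uses $T(w)=w$ (from $w\in\im(\wedge F)\subseteq\im(T)$) and Point~\ref{it:supp_of_RO} of Lemma~\ref{lem:int_lemma_for_order} to control the tail.

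One small correction: your claim that the leading monomial of a nonzero $u\in\ker(\wedge F)$ lies in $\red(\wedge F)$ is not really a consequence of Proposition~\ref{prop:ker_im_closed}; it follows instead from Point~\ref{it:supp_of_RO} of Lemma~\ref{lem:int_lemma_for_order}. Indeed, writing $u=\lc(u)\lm(u)+r$ with $\supp(r)$ strictly $\ordred$-below $\lm(u)$, the lemma gives $\lm(u)\notin\supp\big((\wedge F)(r)\big)$; if $\lm(u)$ were a $\wedge F$-normal form one would then get $\pi_{\lm(u)}\big((\wedge F)(u)\big)=\lc(u)\neq 0$, contradicting $u\in\ker(\wedge F)$.
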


\begin{proof}
  Let us show Point~\ref{it:local_equiv}. First, assume that
  $v_{\epsilon}\in\B(0,\epsilon)$ exists such that
  $v\rewEquiv_Fv'+v_{\epsilon}$. By definition and by continuity of
  $\wedge F$, $v-v'-v_{\epsilon}$ belongs to its kernel and
  $(\wedge F)(v_{\epsilon})$ goes to zero when $\epsilon$ does.
  Hence, $v-v'\in\ker(\wedge F)$. Conversely, $v-v'\in\ker(\wedge F)$
  implies that $v=\sum_{i\geq 1}\lambda_i(g_i-T_i(g_i))+v'$, where the
  sum maybe infinite, $T_i\in F$ and the sequence $(d(g_i))_i$ goes to
  $0$. For every $k\geq 1$, letting
  $v_k:=\sum_{i\geq k}\lambda_i(g_i-T_i(g_i))$, we have
  $T_k(v_k+v')=T_k(v_{k+1}+v')$, so that $v\rewEquiv_Fv_k+v'$. The
  sequence $(v_k)_k$ goes to zero, which shows the direct implication of
  Point~\ref{it:local_equiv}.

  Let us show Point~\ref{it:CR_lattice}. By induction, assume that a
  finite composition $R_n$ of elements of $F$ has been constructed such
  that $\supp(R_n(v))\cap\G{k}$ is included in $\nf(\wedge F)$, for every
  $k<n$. By confluence of $F$, $\red(\wedge F)$ is the union of the sets
  $\red(T)$, where $T\in F$. By induction on the well-ordered set
  $\G{n}$, we construct a finite composition $R$ of elements of $F$ such
  that $\supp(R(R_n(v)))\cap\G{n}$ is included in $\nf(\wedge F)$. We let
  $R_{n+1}:=R\circ R_n$. By this iterative construction, we get a
  sequence $(v_n)_n$, where $v_n:=R_n(v)$, such that $v\rewTransSym_Fv_n$
  and $\delta((\wedge F)(v),v_n)\leq 1/n$. Passing to the limit, we get
  $v\rewTop_F(\wedge F)(v)$.
\end{proof}

\begin{theorem}\label{thm:lattice_confluence}
  Let $F\subseteq\emph{\textbf{RO}}(G,\ <_{\emph{red}},\ d)$. Then, $F$
  is confluent if and only if $\rewF$ is $\delta$-confluent.
\end{theorem}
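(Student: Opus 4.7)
The plan is to treat the two implications separately, exploiting the topological lattice machinery built up in the preceding propositions. The core idea is that $\wedge F$ acts as a ``normal form operator'' and the obstructions measure the failure of rewriting to reach it.

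For the implication $F$ confluent $\Rightarrow$ $\rewF$ is $\delta$-confluent, I would argue directly. Suppose $v\rewF v_1$ and $v\rewF v_2$. Then $v-v_1$ and $v-v_2$ both lie in $\sum_{T\in F}\ker(T)\subseteq\ker(\wedge F)$, hence $(\wedge F)(v)=(\wedge F)(v_1)=(\wedge F)(v_2)$. Point~\ref{it:CR_lattice} of Proposition~\ref{prop:intermediate_results_for_confluence} then yields $v_i\rewTop_F(\wedge F)(v_i)=(\wedge F)(v)$ for $i=1,2$, so both branches topologically rewrite to the common target $(\wedge F)(v)$.

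For the converse I would proceed by contradiction. Assume $\rewF$ is $\delta$-confluent and pick an obstruction $g\in\obs$, that is, $g\in\nf(F)$ with $(\wedge F)(g)\neq g$. Set $v:=g-(\wedge F)(g)$. Since $\wedge F$ is a reduction operator, every element of $\supp((\wedge F)(g))$ is strictly smaller than $g$ for $\ordred$, so $v\in\ker(\wedge F)$ with $\lm(v)=g$ and $\lc(v)=1$. By Proposition~\ref{prop:admissible_decompo}, $v$ admits an admissible decomposition
\[v=\sum_{i=1}^n\lambda_i(g_i-T_i(g_i))+r,\]
with $\lambda_i\neq 0$, $T_i\in F$, $g_i\leq g$ and $\lm(r)<g$. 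Since $g\in\nf(F)\subseteq\nf(T_i)$, any index with $g_i=g$ satisfies $T_i(g)=g$ and contributes zero, so I may assume $g_i\ordred g$ for every remaining index. For such indices, $\supp(g_i-T_i(g_i))$ sits strictly below $g$, and $\lm(r)<g$ forces $\pi_g(r)=0$. Extracting the coefficient of $g$ on both sides would then give $1=0$, a contradiction.

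The main obstacle is the converse direction, but the decisive ingredient has already been supplied by Proposition~\ref{prop:admissible_decompo}: once admissibility is granted for arbitrary elements of $\ker(\wedge F)$, the rest is a one-line coefficient comparison at the leading monomial. The forward direction is an immediate consequence of the topological Church--Rosser-like statement in Point~\ref{it:CR_lattice} of Proposition~\ref{prop:intermediate_results_for_confluence}.
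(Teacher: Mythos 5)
Your proof is correct, and it rests on the same two ingredients the paper uses: Point~\ref{it:CR_lattice} of Proposition~\ref{prop:intermediate_results_for_confluence} for the forward implication and Proposition~\ref{prop:admissible_decompo} for the converse. The only real divergence is stylistic. For the forward direction, you replace the paper's invocation of Point~\ref{it:local_equiv} by the simpler observation that $v-v_i\in\ker(\wedge F)$ and linearity force $(\wedge F)(v)=(\wedge F)(v_1)=(\wedge F)(v_2)$; this is a small but genuine tidying up, since the heavier equivalence of Point~\ref{it:local_equiv} is not needed there. For the converse, the paper works universally, showing from the admissible decomposition that $\lm(v)\in\bigcup_{T\in F}\red(T)$ for every $v\in\ker(\wedge F)$ and thus $\red(\wedge F)=\bigcup_T\red(T)$; you instead argue by contradiction from a specific obstruction $g\in\obs$, applying the decomposition to $v=g-(\wedge F)(g)$ and comparing coefficients at $g$. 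These are the same computation viewed positively versus by contradiction, and both correctly use that terms with $g_i=g$ vanish because $g\in\nf(T_i)$. No gap.
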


\begin{proof}
  Assume that $F$ is confluent and let $v\in\KG$ which rewrites both into
  $v_1$ and $v_2$. From~\ref{it:CR_lattice} of
  Proposition~\ref{prop:intermediate_results_for_confluence},
  $v_i\rewTop_F(\wedge F)(v_i)$, for $i=1,2$. We also have
  $v_1\rewEquiv_Fv_2$, so that $(\wedge F)(v_1)=(\wedge F)(v_2)$
  from~\ref{it:local_equiv} of
  Proposition~\ref{prop:intermediate_results_for_confluence}. Hence,
  $\rewF$ is $\delta$-confluent.

  If $\rewF$ is $\delta$-confluent, from
  Proposition~\ref{prop:admissible_decompo}, every $v\in\ker(\wedge F)$
  admits an admissible decomposition $\sum\lambda_i(g_i-T_i(g_i))$ such
  that $\lm(v)$ is the greatest of the $g_i$'s. Moreover, the $g_i$'s may
  be chosen in such a way that they belong to $\red(T_i)$, so that
  $\lm(v)$ is $T$-reducible for $T\in F$. Hence, $\red(\wedge F)$ being
  equal to  $\bigcup\{\lm(v)\mid v\in\ker(\wedge F)\}$, it is the union
  of the sets $\red(T)$, $T\in F$, so that $F$ is confluent.
\end{proof}

\bigskip

\noindent
{\large\textbf{3.2. Duality and series representations}}
\addcontentsline{toc}{subsection}{3.2. Duality and series representations}
\setcounter{subsection}{2}
\setcounter{theorem}{0}
\bigskip

In this Section, we fix a countable set $G$, equipped with a well-order
$<$. We fix a strictly decreasing map $d:G\to\R {>0}$. Considering
$\ordop$, the opposite order of $<$, $\KG$ is the set of formal series
over $G$ and $d$ is an elimination map, as pointed out in Example
\ref{ex:formal_series}. We consider the following two sets of reduction
operators: $\ROG(G,<,1)$ and $\ROG(G,\ordop,d)$, where $1$ is the
function $g\mapsto 1$. For simplicity, we say reduction operators on
$\K G$ and $\KG$, and we denote these sets by $\ROG(G,<)$ and
$\ROG(\widehat G,\ordop)$, respectively. 
\medskip

We denote by $\K G^*$ the algebraic dual of $\K G$. For
$\varphi\in\K G^*$ and $v\in\K G$, let $\dual{\varphi}{v}\in\K$ by the
result obtained by applying $\varphi$ to $v$. In the sequel, we identify
$\K G^*$ to $\KG$ through the isomorphism
$\K G^*\to\KG,\ \varphi\mapsto\sum\dual{\varphi}{g}g$. For $T$ an
endomorphism of $\K G$, we denote by $T^*:\KG\to\KG$ the adjoint operator
defined by $T^*(\varphi)=\varphi\circ T$. For a subspace
$V\subseteq\K G$, we denote by $V^{\bot}\subseteq\KG$ the orthogonal
space of $V$, that is, the set of $\varphi\in\KG$ such that
$V\subseteq\ker(\varphi)$.
\smallskip

\begin{proposition}\label{prop:dual_RO}
  For $T\in\ROGG(G,<)$, the operator $T^!:=\idd{\KG}-T^*$ is a reduction
  operator on $\KG$. Moreover, we have $\nff(T^!)=\redd(T)$,
  $\redd(T^!)=\nff(T)$ and $\ker(T^!)=\ker(T)^{\bot}$.
\end{proposition}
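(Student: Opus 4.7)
The plan is to verify in order that $T^!$ is a reduction operator on $\KG$ with respect to $\ordop$, identify its normal forms and reducible monomials, and finally compute its kernel. The key observation throughout is the transpose identity $\pi_{g''}(T^*(g)) = \dual{T^*(g)}{g''} = \dual{g}{T(g'')} = \pi_g(T(g''))$, valid for all $g, g'' \in G$, which reduces every claim to bookkeeping of coefficients of $T$ on basis elements.

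I would first establish continuity of $T^*$ (hence of $T^!$), which is the only place where the hypothesis that $d$ is strictly decreasing really enters. If $g'' \in \supp(T^*(v))$, then $\pi_{g''}(T^*(v)) = \sum_{g} \pi_g(v)\pi_g(T(g''))$ is nonzero, so some $g \in \supp(v)\cap\supp(T(g''))$ exists; since $T$ is a reduction operator for $<$, such a $g$ satisfies $g \leq g''$, and strict monotonicity of $d$ then yields $d(g'') \leq d(g) \leq \delta(v,0)$. Hence $\delta(T^*(v), 0) \leq \delta(v, 0)$ and $T^*$ is $1$-Lipschitz (in particular it restricts to a map $\KG\to\KG$). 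The projector identity $(T^!)^2 = T^!$ then reduces to $(T^*)^2 = T^*$, which is just the adjoint of $T^2 = T$.

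For the support condition and the identification of $\nf(T^!), \red(T^!)$, I would split into two cases on $g \in G$. If $g \in \red(T)$, Example~\ref{ex:polynomials} gives $\im(T) = \K\nf(T)$, so $\pi_g(T(g'')) = 0$ for every $g''$, hence $T^*(g) = 0$ and $T^!(g) = g$, proving $g \in \nf(T^!)$. If $g \in \nf(T)$, the nonzero contributions to $T^*(g) = \sum_{g''}\pi_g(T(g''))g''$ come either from $g'' = g$ (contributing the summand $g$) or from $g'' \in \red(T)$ with $g \in \supp(T(g''))$, which forces $g < g''$ by the support condition on $T$; subtracting $g$ yields $T^!(g) = -\sum_{g'' > g,\, g'' \in \red(T)}\pi_g(T(g''))g''$, whose support is strictly $\ordop$-smaller than $g$ and does not contain $g$. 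Hence $T^!(g) \neq g$ and $g \in \red(T^!)$.

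Finally, the kernel equality unfolds as: $T^!(v) = 0$ iff $v = T^*(v)$, iff $\dual{v}{g} = \dual{v}{T(g)}$ for every $g \in G$, iff $\dual{v}{u - T(u)} = 0$ for every $u \in \K G$. Since $T$ is a projector on $\K G$, the set $\{u - T(u)\mid u\in\K G\}$ coincides with $\ker(T)$, so the condition is exactly $v \in \ker(T)^{\bot}$. I expect the continuity step to be the main obstacle, both because it is the only one that really uses strict monotonicity of $d$ and because one must simultaneously check that $T^*$ restricts to a well-defined map $\KG \to \KG$; the remaining parts are adjunction identities and accounting for coefficients.
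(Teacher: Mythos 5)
Your proof is correct and follows essentially the same route as the paper: compute $T^!$ on basis elements via the adjoint pairing, split into the cases $g\in\redd(T)$ and $g\in\nff(T)$ to obtain both the support condition for $\ordop$ and the identification of $\nff(T^!)$ and $\redd(T^!)$, derive continuity from the $1$-Lipschitz bound $\delta(T^!(v),0)\leq\delta(v,0)$, and unfold the kernel identity to orthogonality against $\{u-T(u)\mid u\in\K G\}=\ker(T)$. The only cosmetic differences are the order in which you treat continuity and the fact that the paper works with the explicit basis $\{g-T(g)\mid g\in\redd(T)\}$ of $\ker(T)$ where you use the full image of $\idd{\K G}-T$, which amounts to the same thing.
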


\begin{proof}
  The operator $T$ being a projector, $T^*$ and $T^!$ are projectors. For
  every $g\in G$, $T^*(g)$ is equal to $\sum\dual{g}{T(g')}g'$, so that
  $T^!(g)=g-\sum\dual{g}{T(g')}g'$. If $g\ordop g'$, that is, $g'<g$, $g$
  does not belong to $\supp(T(g'))$, that is, $\dual{g}{T(g')}=0$. Hence,
  $T^!(g)=g-\sum\dual{g}{T(g')}g'$, where the sum is taken over all $g'$'s
  such that $g'\ordopref g$. If $g\in\red(T)$, then $\dual{g}{T(g')}=0$,
  for every $g'\ordopref g$, so that $T^!(g)=g$. If $g\in\nf(T)$, then
  $\dual{g}{T(g)}=1$, so that $T^!(g)=-\sum\dual{g}{T(g')}g'$, the sum is
  taken over $g'$'s such that $g'\ordop g$. Hence, $\red(T)$ and $\nf(T)$
  are included in $\nf(T^!)$ and $\red(T^!)$, respectively, and by using
  that $\red(T)\cup\nf(T)=G$, these inclusions are equalities. Moreover,
  denoting by $\delta$ the metric defined such as in \eqref{equ:metric},
  for every $v\in\K G$, we have $\delta(T^!(v),0)\leq\delta(v,0)$, so
  that $T^!$ is continuous at $0$, hence continuous. Hence, $T^!$ is a
  reduction operator on $\KG$. It remains to show the relation on
  kernels. For $\varphi\in\ker(T^!)$ and $g\in\red(T)$, we have
  $\dual{\varphi}{g-T(g)}=\dual{T^!(\varphi)}{g}=0$, so that $\varphi$
  vanishes over the set $\{g-T(g)\mid g\in\red(T)\}$. This set forms a
  basis of $\ker(T)$, so that $\varphi\in\ker(T)^{\bot}$. Conversely let
  $\varphi\in\ker(T)^{\bot}$ and $v\in\K G$. We have
  $\dual{T^!(\varphi)}{v}=\dual{\varphi}{v-T(v)}=0$, so that
  $\varphi\in\ker(T^!)$.
\end{proof}
\smallskip

From Proposition~\ref{prop:dual_RO}, we have a map
\begin{equation}\label{equ:duality_for_RO}
  \ROG(G,<)\to\ROG(\widehat{G},\ordop),\ T\mapsto T^!.
\end{equation}
Moreover, if $V$ and $W$ are subspaces of $\K G$, then $V\subseteq W$
implies $W^{\bot}\subseteq V^{\bot}$. Hence, the equality
$\ker(T^!)=\ker(T)^{\bot}$, for $T\in\ROG(G,<)$, implies that the map
\eqref{equ:duality_for_RO} is strictly decreasing. 
\medskip

We finish this Section by relating the duality for reduction operators to
representations of series. 
\smallskip

\begin{definition}
  Let $S\in\KG$ be a formal series.
  \begin{itemize}
  \item The \emph{representations category} of $S$ is the category
    defined as follows:
  \begin{itemize}
  \item objects are triples $(V,\alpha,\varphi)$, where $V$ is a vector
    space, $\alpha:V\to\K G$ is a linear map, and $\varphi$ a linear form
    on $V$ such that $S=\varphi\circ\alpha$;
  \item a morphism between two representations $(V,\alpha,\varphi)$ and
    $(V',\alpha',\varphi')$ is a linear map $\phi:V\to V'$ such that
    $\phi\circ\alpha=\alpha'$ and $\varphi'\circ\phi=\varphi$.
  \end{itemize}
  \item A representation is said to be \emph{surjective} if $\alpha$ is
    surjective.
  \item A \emph{representation by operator} is a representation
    $(\K\nf(T),T,S_{\mid\K\nf(T)})$, where $S_{\mid\K\nf(T)}$ is the
    restriction of $S$ to $\K\nf(T)$. In this case, we say that $S$ is
    \emph{represented} by $T$.
  \end{itemize}
\end{definition}
\smallskip
\noindent
We point out that two representations are isomorphic if and only if there
exists a morphism of representations between them which is an isomorphism
as a linear map and that a representation by operator is surjective.
\medskip

The following proposition means that being a surjective representation is
a duality condition.

\begin{proposition}\label{prop:surjective_representations}
  A surjective representation of $S\in\KG$ is isomorphic to a
  representation by operator of $S$. Moreover, S is represented by a
  reduction operator T if and only if $S\in\ker(T^!)$.
\end{proposition}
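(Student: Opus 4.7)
The plan is first to establish the second equivalence and then to deduce the first statement from it.

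For the second equivalence, I would unravel the definition: the triple $(\K\nf(T), T, S_{\mid\K\nf(T)})$ is a representation of $S$ iff $S = S_{\mid\K\nf(T)} \circ T$ as linear forms on $\K G$. Since $T(v) \in \K\nf(T)$ for every $v \in \K G$, this identity reduces to $S(v) = S(T(v))$ for all $v$, i.e., $S$ vanishes on every $v - T(v)$. By Proposition~\ref{prop:ker_im_closed}, the family $\{g - T(g) \mid g \in \red(T)\}$ spans $\ker(T)$, so the condition becomes $\ker(T) \subseteq \ker(S)$, that is $S \in \ker(T)^{\bot}$. By Proposition~\ref{prop:dual_RO}, $\ker(T)^{\bot} = \ker(T^!)$, which gives the equivalence.

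For the first statement, I would start from a surjective representation $(V, \alpha, \varphi)$ of $S$ and use $S = \varphi \circ \alpha$ to deduce $\ker(\alpha) \subseteq \ker(S)$. Since reduction operators here live in $\ROG(G,<,1)$, the metric on $\K G$ is discrete, every subspace is closed, and as observed in Example~\ref{ex:polynomials}, $\ker$ is a bijection between subspaces of $\K G$ and elements of $\ROG(G,<)$. This produces a unique $T \in \ROG(G,<)$ with $\ker(T) = \ker(\alpha)$, and the second equivalence, already proved, then shows that $S$ is represented by $T$. To build the isomorphism of representations, I would apply the first isomorphism theorem to the surjective linear maps $\alpha : \K G \to V$ and $T : \K G \to \K\nf(T)$, which share their kernel: it supplies a unique linear isomorphism $\phi : V \to \K\nf(T)$ such that $\phi \circ \alpha = T$, which is the first of the two morphism conditions. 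The remaining condition $S_{\mid\K\nf(T)} \circ \phi = \varphi$ would then be verified by precomposing with the surjective $\alpha$ and using $\phi \circ \alpha = T$: it reduces to $S \circ T = S$, which was established in the second equivalence.

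I anticipate no serious obstacle. The only structural input is Theorem~\ref{thm:lattice_structure}, invoked through Example~\ref{ex:polynomials} to upgrade the subspace $\ker(\alpha) \subseteq \K G$ into a reduction operator; the rest is formal bookkeeping inside the category of representations, and the compatibility with $\varphi$ follows immediately from the duality characterization proved first.
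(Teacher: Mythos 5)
Your proof is correct and takes essentially the same route as the paper: identify $V\cong\K G/\ker(\alpha)$, construct $T$ with $\ker(T)=\ker(\alpha)$ via the bijection of Theorem~\ref{thm:lattice_structure} in the discrete setting, build the isomorphism $\phi$ with $\phi\circ\alpha=T$, and check compatibility with the linear forms. The only deviations are cosmetic: you establish the second equivalence first (the paper does it last) and you route it through the already-proved identity $\ker(T^!)=\ker(T)^{\bot}$ from Proposition~\ref{prop:dual_RO} rather than through the direct computation $S-T^*(S)=T^!(S)$; you are also slightly more explicit than the paper in justifying $S(T(u))=S(u)$ and in verifying the second morphism condition by precomposing with the surjective $\alpha$.
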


\begin{proof}
  Assume that $(V,\alpha,\varphi)$ is a surjective representation of $S$,
  so that $V$ is the quotient of $\K G$ by $\ker(\alpha)$. Let $T$ be the
  reduction operator such that $\ker(T)=\ker(\alpha)$, so that there is
  an isomorphism $\phi:V\to\K\nf(T),\ \alpha(u)\mapsto T(u)$, with
  inverse $\phi^{-1}(u)=\alpha(u)$. In order to show that $\phi$ is a
  morphism of representations, we only have to show that
  $\varphi(v)=S(\phi(v))$, for every $v\in V$. Given $u\in\K G$ such that
  $\alpha(u)=v$, we have $\varphi(v)=S(u)$ and
  $S(\phi(v))=S(\phi(\alpha(u)))=S(T(u))=S(u)$. Hence, $\phi$ is an
  isomorphism of representations. The second assertion of the proposition
  is due to the fact that the relation $S=S\circ T$,  means $S=T^*(S)$,
  that is, $S\in\ker(T^!)$.
\end{proof}
\medskip

Finally, we classify series represented by a single reduction operator.

\begin{proposition}\label{prop:representations_and_duality}
  Let T be a reduction operator on $\K G$. Then, $T^*$ induces an
  isomorphism between $\widehat{\K\nff(T)}$ and series represented by T.
\end{proposition}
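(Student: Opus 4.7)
The approach is to identify the target explicitly and then directly verify that $T^*$ restricted to $\nSeries$ is a linear bijection onto it. By Proposition~\ref{prop:surjective_representations}, the series represented by $T$ are exactly the elements of $\ker(T^!)$; since $T$ is a projector on $\K G$, its adjoint $T^*$ is a projector on $\KG$, and the relation $T^!=\id{\KG}-T^*$ yields $\ker(T^!)=\im(T^*)$. Thus the plan reduces to showing that $T^*\colon\nSeries\to\im(T^*)$ is a linear bijection.

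For injectivity, I use the adjoint relation $\dual{T^*(S)}{g}=\dual{S}{T(g)}$ for every $g\in G$. If $S\in\nSeries$ satisfies $T^*(S)=0$, then for each $g\in\nf(T)$ we have $T(g)=g$, hence $\dual{S}{g}=0$; combined with $\supp(S)\subseteq\nf(T)$, this forces $S=0$.

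For surjectivity, given $S'\in\im(T^*)$, I define $S\in\nSeries$ by $\dual{S}{g}:=\dual{S'}{g}$ for $g\in\nf(T)$ and $\dual{S}{g}:=0$ for $g\in\red(T)$, and then verify $T^*(S)=S'$ coefficientwise. For $g\in\nf(T)$ this is immediate from $T(g)=g$. For $g\in\red(T)$, the key input is that $\im(T)=\K\nf(T)$ by Proposition~\ref{prop:ker_im_closed} (noting that $\K G$ carries the discrete topology here, as $d=1$, via Example~\ref{ex:polynomials}), so $T(g)$ is a \emph{finite} linear combination of elements of $\nf(T)$; on such monomials $S$ and $S'$ agree by construction, giving
\[\dual{T^*(S)}{g}=\dual{S}{T(g)}=\dual{S'}{T(g)}=\dual{T^*(S')}{g}=\dual{S'}{g}.\]

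No substantive obstacle is expected. The only subtlety worth flagging is the asymmetric nature of the setting: $T$ acts on the discrete space $\K G$, so the pairings $\dual{S}{T(g)}$ are genuinely finite sums, and the surjectivity step does not require any continuity or convergence argument when replacing $S$ by $S'$ inside $T(g)$. One could instead argue conceptually by showing $\ker(T^*)=\K\nf(T)^{\bot}=\widehat{\K\red(T)}$, whence $\KG=\nSeries\oplus\ker(T^*)$ and the result follows from $T^*$ being a projector; but the coefficientwise verification above is equally short and more explicit.
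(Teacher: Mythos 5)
Your proof is correct, and it takes a genuinely different route from the paper's. After the common first step — invoking Proposition~\ref{prop:surjective_representations} to identify the series represented by $T$ with $\ker(T^!)$ — the paper proceeds structurally: it quotes the total basis $\{g-T^!(g)\mid g\in\red(T^!)=\nf(T)\}$ of $\ker(T^!)$ (Propositions~\ref{prop:ker_im_closed} and \ref{prop:dual_RO}), observes that every $S\in\ker(T^!)$ therefore has a unique expansion $S=S'-T^!(S')=T^*(S')$ with $S'\in\nSeries$, and reads off $S\mapsto S'$ as the inverse. You instead replace the total-basis machinery with the elementary projector identity $\ker(T^!)=\ker(\id{\KG}-T^*)=\im(T^*)$ and then verify bijectivity of $T^*\colon\nSeries\to\im(T^*)$ directly at the level of coefficients, constructing the inverse explicitly as the restriction map $S'\mapsto S'\big|_{\nf(T)}$. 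Your route is more hands-on and avoids re-invoking the general completeness results for $\ker(T^!)$; the paper's buys brevity by reusing its structural lemmas. Your observation that $T(g)$ lies in $\K\nf(T)$ (a \emph{finite} linear combination, because $T$ lives on the discrete side $\K G$) is exactly the point that makes the coefficientwise surjectivity computation legitimate without any convergence argument, and you are right to flag it.
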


\begin{proof}
  From Proposition~\ref{prop:surjective_representations}, $S\in\KG$ is
  represented by $T$ if and only if $S\in\ker(T^!)$. Moreover the set of
  $g-T^!(g)$, $g\in\red(T^!)=\nf(T)$, forms a total basis of $\ker(T^!)$.
  Hence, there exists a unique $S'\in\widehat{\K\nf(T)}$ such that
  $S=S'-T^!(S')=T^*(S')$. The map $S\mapsto S'$ is the inverse of $T^*$.
\end{proof}

\bigskip

\newpage

\begin{center}
  \bf\large 4. APPLICATIONS TO FORMAL POWER SERIES
  \addcontentsline{toc}{section}{4. Applications to formal power series}
  \setcounter{section}{4}
\end{center}

\smallskip

In this Section, we apply the theory of topological reduction operators
presented in the previous Sections to formal power series. For that, we
recall some notions introduced in Example~\ref{ex:pol_and_formal_series},
and we fix some conventions and notations.
\smallskip

In the following two Section, we fix a set $X:=\{x_1,\cdots,x_n\}$ of
indeterminates. We denote by $\cPol$ and $\ncPol$ the commutative and
noncommutative polynomial algebras over $X$, respectively. As vector
spaces, these algebras have a basis composed of commutative and
noncommutative monomials, respectively, the noncommutative being
identified to words. A \emph{monomial order} is a well-order on
monomials, compatible with multiplication. A (non)commutative
\emph{\Gr\ basis} of a (two-sided) ideal $I$ of $\cPol$ or $\ncPol$, is a
generating subset $R$ of $I$ such that $\lm(R)$ is a generating subset of
the monomial ideal $\lm(I)$. In other words, $R$ is a (non)commutative
\Gr\ basis if and only if for every $f\in I$, there exists $g\in R$ such
that $\lm(g)$ divides $\lm(f)$. We recall that this is equivalent to the
fact that the polynomial reduction induced by $R$ is a confluent
rewriting relation. In this case, the irreducible monomials for the
polynomial reduction form a linear basis of the quotient algebra $A/I$,
where $A=\cPol$ or $\ncPol$.
\smallskip

Denote by $A$ the algebra $\cPol$ or $\ncPol$. For a subset $S$ of $A$,
we denote by $I(S)$ the two-sided ideal generated by $S$. We equip $A$
with the $I(X)$-\emph{adic} topology, that is, the topology induced by
the metric $\delta(f,g)=1/2^n$, where $n$ is the smallest degree of a
monomial occurring in the decomposition of $f-g$. The sets $\cSeries$ and
$\ncSeries$ of commutative and noncommutative formal power series,
respectively, are the completions of the corresponding algebras. Note
that the support of a formal power series $\sum\alpha_mm$, where the sum
is taken over monomials, is the set of monomials $m$ such that $\alpha_m$
is different from $0$.

\bigskip

\noindent
{\large\textbf{4.1. Topological confluence and standard bases}}
\addcontentsline{toc}{subsection}{4.1. Topological confluence and standard bases}
\setcounter{subsection}{1}
\setcounter{theorem}{0}
\bigskip

Throughout this Section, we only deal with commutative formal power
series. Let $<$ be a monomial order on commutative monomials, which is
assumed to be compatible with degrees: $\deg(m)<\deg(m')$ implies $m<m'$.
Let $\ordred:=\ordop$ be the opposite order of $<$, so that the leading
monomial of a formal power series is the smallest element of its support
with respect to $<$. 

\smallskip

\begin{definition}
  Let $I$ be an ideal of $\cSeries$. A \emph{standard basis} of $I$ is a
  generating set $R$ of $I$ such that $\lm(R)$ generates the monomial
  ideal $\lm(I)$.
\end{definition}
\medskip

Let us consider the map $d$ which maps every monomial $m$ to
$1/2^{\deg(m)}$, so that $d$ is an elimination map and the metric induced
by $d$ is precisely the metric $\delta$ of the $I(X)$-adic topology. Our
purpose is to relate standard bases for power series ring ideals to the
confluence property of reduction operators. For that, for any
$f\in\cSeries$, we denote by $T(f)$ the reduction operator whose kernel
is the closed ideal generated by $f$: $T(f):=\ker^{-1}(\overline{I(f)})$.
Explicitly, for every monomial $m$, $T(f)$ is defined by the following
recursive formulas:
\begin{itemize}
\item if $m$ is not divisible by $\lm(f)$, then $T(f)(m)=m$,
\item if $m=\lm(f)m'$, then $T(f)(m)=1/\lc(f)
  \Big(T(f)(m'(\lc(f)\lm(f)-f))\Big)$.
\end{itemize}
In particular, $\red(T(f))$ is the monomial ideal spanned by $\lm(f)$.
Finally, for a subset $R\subseteq\cSeries$, we denote by
$F(R):=\{T(f)\mid f\in R\}$.
\smallskip

\begin{proposition}\label{prop:lattice_car_of_GB}
  A subset $R$ of $\cSeries$ is a standard basis of the ideal it
  generates if and only if $F(R)$ is a confluent set of reduction
  operators.
\end{proposition}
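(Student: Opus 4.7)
The plan is to translate the two equivalent conditions into the same combinatorial statement about leading monomials, using the definition of the meet operator $\wedge$ and formula \eqref{equ:leading_monomials}. I would first rewrite the confluence of $F(R)$, namely $\nf(F(R)) = \nf(\wedge F(R))$, in the complementary form $\bigcup_{f\in R}\red(T(f)) = \red(\wedge F(R))$, and then compute each side separately.

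For the left-hand side, the recursive formulas defining $T(f)$ show that $\red(T(f))$ equals the set of monomials divisible by $\lm(f)$, so the union over $f\in R$ is precisely the monomial ideal generated by $\lm(R)$.

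For the right-hand side, I would first identify $\ker(\wedge F(R))$ with $\overline{I(R)}$. By the definition $T(f) := \ker^{-1}(\overline{I(f)})$ together with the definition of $\wedge$, we have $\ker(\wedge F(R)) = \overline{\sum_{f\in R}\overline{I(f)}}$, and the sandwich $\sum_{f\in R} I(f) \subseteq \sum_{f\in R}\overline{I(f)} \subseteq \overline{I(R)}$ then gives the claim after taking closures. Next I would prove the general identity $\red(T) = \lm(\ker(T))$ for any reduction operator $T$: the inclusion $\red(T) \subseteq \lm(\ker(T))$ is immediate since $\lm(g - T(g)) = g$ whenever $g \in \red(T)$, by the compatibility of $T$ with $\ordred$ built into Definition \ref{def:reduction_operators}; the reverse inclusion uses Point 1 of Lemma \ref{lem:int_lemma_for_order}, for if $v\in\ker(T)$ had $\lm(v) = g \in \nf(T)$, then applying $T$ would force $\pi_g(T(v)) = \pi_g(v) \neq 0$, contradicting $T(v) = 0$. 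Combining this identity with formula \eqref{equ:leading_monomials} then yields $\red(\wedge F(R)) = \lm(\overline{I(R)}) = \lm(I(R))$.

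Assembling the two computations, $F(R)$ is confluent if and only if $\lm(I(R))$ equals the monomial ideal generated by $\lm(R)$. Since $<$ is a monomial order, $\lm(I(R))$ is itself stable under multiplication by monomials, so the inclusion $\langle \lm(R)\rangle \subseteq \lm(I(R))$ is automatic; the equality thus reduces to the standard basis inclusion $\lm(I(R)) \subseteq \langle \lm(R)\rangle$. I expect the most delicate step to be the identity $\red(T) = \lm(\ker(T))$, which is short but depends essentially on Lemma \ref{lem:int_lemma_for_order} and the monotonicity clause in the definition of a reduction operator; everything else amounts to unwinding definitions and invoking \eqref{equ:leading_monomials}.
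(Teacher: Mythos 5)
Your proof is correct and follows essentially the same route as the paper: reduce confluence to the equality $\red(\wedge F(R)) = \bigcup_{f\in R}\red(T(f))$, identify the right-hand side with the monomial ideal generated by $\lm(R)$, and compute $\red(\wedge F(R)) = \lm(\overline{I(R)}) = \lm(I(R))$ via \eqref{equ:leading_monomials}. The only difference is that you make explicit two facts the paper uses tacitly, namely $\ker(\wedge F(R)) = \overline{I(R)}$ and the identity $\red(T) = \lm(\ker(T))$; your arguments for both are sound.
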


\begin{proof}
  We denote by $I(R)$ the ideal of $\K[[X]]$ generated by $R$. By
  definition of $\wedge F(R)$, its kernel is equal to $\overline{I(R)}$.
  Hence, $\red(\wedge F(R))$ is equal to $\lm(\overline{I(R)})$, and
  from~\eqref{equ:leading_monomials}, the latter is equal to
  $\lm(I(R))$. Moreover, $\red(F(R))$ is the union of the sets
  $\lm(T(f))$, $f\in R$, that is, it is the monomial ideal spanned by
  $\lm(R)$. Hence the statement of the proposition is due to the
  following sequence of equivalences: $F(R)$ is confluent if and only if
  $\red(\wedge F(R))$ is equal to $\red(F(R))$, that is, if and only if
  $\lm(I(R))$ is the monomial ideal spanned by $\lm(R)$, that is, if and
  only if $R$ is a standard basis of $I(R)$.
\end{proof}
\smallskip

As for \Gr\ bases and $S$-polynomials, there is a criterion in terms of
\emph{S-series} for a generating set of an ideal of a power series ring
to be a standard basis~\cite[Theorem 4.1]{MR1075338}. From
Theorem~\ref{thm:lattice_confluence} and
Proposition~\ref{prop:lattice_car_of_GB}, we obtain the following
formulation of this criterion in terms of the $\delta$-confluence
property.
\smallskip

\begin{theorem}\label{thm:standard_bases_confluence}
  A subset $R$ of $\cSeries$ is a standard basis of the ideal it
  generates if and only if the rewriting relation $\to_{F(R)}$ is
  $\delta$-confluent.
\end{theorem}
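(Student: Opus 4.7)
The plan is to obtain the theorem as an immediate chaining of two results that have already been established in the paper, namely Proposition~\ref{prop:lattice_car_of_GB} and Theorem~\ref{thm:lattice_confluence}. No new computation should be needed: both halves of the equivalence have been isolated in a form that fits together directly.

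First I would observe that the $d$ chosen in this Section, sending every monomial $m$ to $1/2^{\deg(m)}$, makes $\delta$ coincide with the metric of the $I(X)$-adic topology on $\K[[X]]$, and that $d$ is an elimination map for the order $\ordred = \ordop$. Hence the framework of Section~3.1 applies to $F(R) \subseteq \textbf{RO}(G, \ordred, d)$, where $G$ denotes the set of commutative monomials. In particular the hypotheses of Theorem~\ref{thm:lattice_confluence} are satisfied.

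Then I would run the chain of equivalences: by Proposition~\ref{prop:lattice_car_of_GB}, $R$ is a standard basis of the ideal $I(R)$ it generates if and only if $F(R)$ is a confluent set of reduction operators in the sense of Definition~\ref{def:confluence_for_RO}; by Theorem~\ref{thm:lattice_confluence} applied to the set $F(R)$, the latter holds if and only if the induced rewriting relation $\rewF$, which is exactly $\to_{F(R)}$, is $\delta$-confluent. Composing these two equivalences yields the statement.

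Since the theorem is obtained by citation, there is no genuine obstacle; the only point requiring minor care is to verify the compatibility of the setting used in Section~3.1 (a set $F$ of reduction operators with $d$ an elimination map with respect to $\ordred$) with the setting of Section~4.1 (where $\ordred = \ordop$ and $d(m) = 1/2^{\deg(m)}$). This verification is straightforward: $d$ is non-decreasing for $\ordred$ because the monomial order $<$ is compatible with degree, and each $\G{n}$ consists of monomials of a fixed degree, hence is finite and \emph{a fortiori} well-ordered by $\ordred$. Once this compatibility is noted, the proof reduces to a single line invoking the two previous results.
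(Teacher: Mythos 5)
Your proposal is correct and takes essentially the same route as the paper: the theorem is obtained by directly chaining Proposition~\ref{prop:lattice_car_of_GB} with Theorem~\ref{thm:lattice_confluence}, after noting that $d(m)=1/2^{\deg(m)}$ is an elimination map for $\ordred=\ordop$. The compatibility check you spell out is worth having, but it is exactly what the paper implicitly relies on.
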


\medskip

\begin{example}
  Consider the example of the introduction: $X:=\{x,y,z\}$, $<$ is the
  deglex order induced by $x>y>z$ and $R:=\{z-y,z-x,y-y^2,x-x^2\}$. Then,
  $\lm(R)$ is equal to $\{x,y,z\}$, and constant coefficients of elements
  of the power series ideal generated by $R$ are equal to $0$, so that
  $R$ is a standard basis of this ideal. Hence, the rewriting relation
  $\to_{F(R)}$ is $\delta$-confluent. However, it is not confluent as
  illustrated by the following diagram, where, for simplicity, we remove
  the subscript $F(R)$:
  \[\begin{tikzcd}
  &
  x\arrow[r] & x^2\arrow[r] & \cdots\arrow[r] & x^{2n}\arrow[Rightarrow, rd,
    bend left] & 
  \\
  z\arrow[ru, bend left]\arrow[rd, bend right] & & & & & 0
  \\
  &
  y\arrow[r] & y^2\arrow[r] & \cdots\arrow[r] & y^{2n}\arrow[Rightarrow,
    ru, bend right] & 
  \end{tikzcd}\]
\end{example}

\bigskip

\noindent
{\large\textbf{4.2. Duality and syntactic algebras}}
\addcontentsline{toc}{subsection}{4.2. Duality and syntactic algebras}
\setcounter{subsection}{2}
\setcounter{theorem}{0}
\bigskip

Throughout out this Section, we only deal with noncommutative objects:
formal power series, algebras, \Gr\ bases $\hdots$ Hence, we omit the
adjective noncommutative. Moreover, we only deal with two-sided ideals,
so that we also omit the adjective two-sided.
\medskip

Our purpose is to relate duality for reduction operators to syntactic
algebras. Let $S\in\K\Span{\Span{X}}$ be a formal power series. The
\emph{syntactic ideal} of $S$, written $I_S$, is the greatest ideal
included in $\ker(S)$. The \emph{syntactic algebra} of $S$ is the
quotient algebra $A_S:=\K\Span{X}/I_S$. The series $S$ is said to be
\emph{rational} if $A_S$ is finite-dimensional as a vector space.
Moreover, an algebra is said to be \emph{syntactic} if it is the
syntactic algebra of a formal power series. Let us illustrate this notion
with an example coming from~\cite{petitot1992algebre}: $X=\{x_0,x_1\}$,
and 

\begin{equation}\label{equ:words-series}
  S:=\sum_{w\in X^*}\val(w)w,
\end{equation}
\noindent
where $\val(w)\in\N$ is the integer whose binary expression is equal to
$w$. From~\cite{petitot1992algebre}, this series is rational. We propose
another proof of this result by providing a \Gr\ basis of the syntactic
ideal of $S$.
\smallskip

\begin{proposition}\label{prop:syntactic_ideal_words}
  The syntactical ideal of \eqref{equ:words-series} admits the following
  \Gr\ basis:
  \[R:=\left\{
  \begin{tabular}{l l}
    $f_1:=x_0x_0-3x_0+2$ & $f_2:=x_0x_1-x_1-2x_0+2$\\[0.2cm]
    $f_3:=x_1x_0-2x_1-x_0+2$ & $f_4:=x_1x_1-3x_1+2$
  \end{tabular}
  \right\}\subset\K\Span{X}.\]
  In particular, \eqref{equ:words-series} is rational.
\end{proposition}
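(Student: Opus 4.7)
I would proceed in three main steps. First, I would verify that the ideal $(R)\subseteq\ncPol$ generated by $R$ is contained in the syntactic ideal $I_S$. For every $u,v\in X^{*}$, the concatenation identity $\val(wt)=2^{\len{t}}\val(w)+\val(t)$, together with the small values $\val(x_0)=0$, $\val(x_1)=1$, $\val(x_0x_0)=0$, $\val(x_0x_1)=1$, $\val(x_1x_0)=2$ and $\val(x_1x_1)=3$, lets one compute each pairing $\langle S,uf_iv\rangle$ directly. Every such pairing reduces to a linear combination of $\val(u)\,2^{\len{v}}$, $2^{\len{v}}$ and $\val(v)$ whose coefficients vanish by inspection; for instance $\langle S,uf_1v\rangle=\val(u)\,2^{\len{v}}(4-6+2)+\val(v)(1-3+2)=0$. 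Hence $(R)\subseteq\ker(S)$, and since $(R)$ is a two-sided ideal it is contained in the largest ideal inside $\ker(S)$, that is, in $I_S$.

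Second, I would show that $R$ is a \Gr\ basis of $(R)$ for the deglex order with $x_0<x_1$. The leading monomials are exactly the four length-two words $x_ix_j$, so the overlap ambiguities are precisely the eight triples $x_ix_jx_k$ with $i,j,k\in\{0,1\}$. For each such triple, one reduces in the two possible orders, using the left or the right length-two factor first, and verifies that the two normal forms coincide; this is a finite, elementary calculation. By the classical Bergman diamond lemma (equivalently, the discrete-topology specialisation of Theorem~\ref{thm:lattice_confluence}), $R$ is then a \Gr\ basis; the normal-form monomials are precisely $\{1,x_0,x_1\}$, and hence $\dim_{\K}\ncPol/(R)=3$. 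Combined with the first step, the induced surjection $\ncPol/(R)\twoheadrightarrow A_S$ yields $\dim_{\K}A_S\leq 3$.

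Third, I would prove the reverse inequality by showing that $1$, $x_0$ and $x_1$ are linearly independent modulo $I_S$. Assume $p:=a+bx_0+cx_1\in I_S$; then $\langle S,upv\rangle=0$ for every $u,v\in X^{*}$, and expanding via the concatenation identity yields
\[
\val(u)\,2^{\len{v}}(a+2b+2c)+c\,2^{\len{v}}+\val(v)(a+b+c)=0.
\]
Specialising $(u,v)=(\epsilon,\epsilon)$ gives $c=0$; then $(u,v)=(\epsilon,x_1)$ and $(u,v)=(x_1,\epsilon)$ give $a+b=0$ and $a+2b=0$, whence $a=b=c=0$. Combining the three steps yields $(R)=I_S$, so $A_S=\ncPol/(R)$ is three-dimensional and $S$ is rational.

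The main obstacle is the overlap check of the second step: although every individual three-letter reduction is trivial, the argument requires handling eight distinct ambiguities with different intermediate states, and the bookkeeping is the most error-prone part. The first and third steps rest only on the elementary additivity $\val(wt)=2^{\len{t}}\val(w)+\val(t)$.
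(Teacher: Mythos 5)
Your proposal is correct and follows the same overall architecture as the paper's proof (ideal inclusion, Gröbner basis via overlap/$S$-polynomial checks, then matching up $I$ with $I_S$), but it resolves the last step by a slightly different and arguably cleaner route. The paper asserts, ``by computing codimensions, we check that $\ker(S)=I\oplus\K\{1,x_0\}$, so that $I$ is the syntactic ideal'' --- this still tacitly requires knowing that no ideal of $\K\Span{X}$ sits strictly between $I$ and $\ker(S)$. You instead bypass the computation of $\ker(S)$ altogether: since $I\subseteq I_S$ and $\{1,x_0,x_1\}$ is a normal-form basis for $\K\Span{X}/I$, it suffices to show that no nonzero $p=a+bx_0+cx_1$ lies in $I_S$. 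Your expansion $\dual{S}{upv}=\val(u)2^{\len{v}}(a+2b+2c)+c\,2^{\len{v}}+\val(v)(a+b+c)$ is correct (it follows from $\val(wt)=2^{\len{t}}\val(w)+\val(t)$), and the three specialisations $(\epsilon,\epsilon)$, $(\epsilon,x_1)$, $(x_1,\epsilon)$ do force $a=b=c=0$. This buys you a fully explicit argument that $I_S/I=0$ without having to exhibit $\ker(S)$ and then separately rule out intermediate ideals; the trade-off is that the paper's version, once the codimension computation is accepted, simultaneously identifies $\ker(S)$, which is reused later in the example following Theorem~\ref{thm:syntactic_algebra_duality}.
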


\begin{proof}
  The ideal generated by $f_1$ is included in $\ker(S)$, since for every
  words $w$ and $w'$, we have
  \[\begin{split}
  \dual{S}{wf_1w'}&= \dual{S}{wx_0x_0w'}-3\dual{S}{wx_0w'}+2\dual{S}{ww'}\\
  &=2^{\len{x_0x_0w'}}\val(w)+\dual{S}{x_0x_0w'}-3\Big(
  2^{\len{x_0w'}}\val(w)+\dual{S}{x_0w'}\Big)
  +2\Big((2^{\len{w'}}\val(w)+\dual{S}{w'}\Big)\\
  &= 2^{\len{w'}}\Big(4-6+2).\val(w)+2^{\len{w'}}\dual{S}{f_1}+
  \dual{S}{w'}\Big(1-3+2\Big)\\
  &=0.
  \end{split}\]
  By using analogous arguments, we show that ideals generated by other
  $f_i$'s are also included in $\ker(S)$. We easily show that for the
  deglex order induced by $x_0<x_1$, all the $S$-polynomials of $f_i$'s
  reduce into zero, so that $R$ forms a \Gr\ basis of the ideal $I$ it
  generates. Hence, the algebra $A:=\K\Span{X}/I$ is 3-dimensional, with
  a basis composed of $1,x_0$ and $x_1$. Moreover, by computing
  codimensions, we check that $\ker(S)$ is equal to $I\oplus\K\{1,x_0\}$,
  so that $I$ is the syntactic ideal of $S$, and $A$ is its syntactic
  algebra.
\end{proof}

We associate to an algebra $A=\K\Span{X}/I$, the reduction operator
$T:=\ker^{-1}(I)$ on $\ncPol$ with kernel~$I$. This operator is computed
as follows: if $R$ is a \Gr\ basis of $I$, then $T(f)$ is the unique
normal form of $f\in\K\Span{X}$ through polynomial reduction. In
particular, a series is represented by $A$ if and only if it is
represented by~$T$.
\smallskip

\begin{theorem}\label{thm:syntactic_algebra_duality}
  Let $I\subseteq\K\Span{X}$ be an ideal and let T be the reduction
  operator with kernel I. Then, the algebra $\ncPol/I$ is syntactic if
  and only if there exists $S'\in\nSeriess$ such that I is the greatest
  ideal included in  $I\oplus\ker(S')$.
\end{theorem}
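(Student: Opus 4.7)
My approach is to use Proposition~\ref{prop:representations_and_duality}, which provides a bijection $S'\mapsto T^*(S')$ between $\nSeries$ and the set of series represented by $T$. Combined with Proposition~\ref{prop:surjective_representations}, this identifies the series $S\in\ncSeries$ satisfying $I\subseteq\ker(S)$ with those of the form $S=T^*(S')$ for some $S'\in\nSeries$. The theorem then reduces to translating the defining condition of the syntactic ideal, namely that $I$ be the greatest ideal contained in $\ker(S)$, through this bijection.

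The key step is to establish, for $S'\in\nSeries$ and $S:=T^*(S')$, the identity
\[\ker(S)=I\oplus\ker(S'),\]
where $\ker(S')$ is understood as the kernel of $S'$ seen as a linear form on $\nPol$. Being a projector with kernel $I$ and image $\nPol$, the operator $T$ induces a direct sum decomposition $\ncPol=I\oplus\nPol$, so every $v\in\ncPol$ writes uniquely as $v=v_I+v_N$ with $v_I\in I$, $v_N\in\nPol$ and $T(v)=v_N$. The relation $S=S'\circ T$ then gives $\dual{S}{v}=\dual{S'}{v_N}$, so $v\in\ker(S)$ if and only if $v_N\in\ker(S')$. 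This directly yields that $I\oplus\ker(S')$ is a genuine direct sum inside $\ncPol$ and coincides with $\ker(S)$.

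Once the identity is in hand, both implications follow at once. For the forward direction, assume $\ncPol/I=A_S$ for some $S\in\ncSeries$, so that $I$ is the greatest ideal in $\ker(S)$. Since $I\subseteq\ker(S)$, Propositions~\ref{prop:surjective_representations} and~\ref{prop:representations_and_duality} provide a unique $S'\in\nSeries$ with $S=T^*(S')$, and the identity gives that $I$ is the greatest ideal in $I\oplus\ker(S')$. Conversely, given such an $S'$, set $S:=T^*(S')$; the identity shows that $I$ is the greatest ideal in $\ker(S)$, so that $I$ is the syntactic ideal of $S$ and $\ncPol/I$ is syntactic. The main subtlety is the correct interpretation of $\ker(S')$ as a subspace of $\nPol$ rather than of $\ncPol$, which is what makes $I\oplus\ker(S')$ a well-defined direct sum in $\ncPol$ and lets the identity $\ker(S)=I\oplus\ker(S')$ be read off directly from the projector decomposition induced by $T$; once this is set up, no further computation is needed.
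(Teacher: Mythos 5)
Your proof is correct and follows essentially the same route as the paper's: both rest on the key identity $\ker(S)=I\oplus\ker(S')$ for $S=T^*(S')$, derived from $\dual{S}{v}=\dual{S'}{T(v)}$ and the projector decomposition $\ncPol=I\oplus\nPol$, together with Proposition~\ref{prop:representations_and_duality} to parametrise the series represented by $T$. The only cosmetic difference is that you make the two directions of the equivalence fully explicit and derive the fact that a series is represented by its syntactic algebra from Propositions~\ref{prop:surjective_representations} and~\ref{prop:representations_and_duality}, whereas the paper invokes it as a known fact from the literature; your remark that $\ker(S')$ must be read as a subspace of $\nPol$ for the direct sum to be meaningful is a correct and useful clarification that the paper handles more tersely via the observation that $I+V$ is direct for every $V\subseteq\nPol$.
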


\begin{proof}
  First, observe that the sum of the statement of the Theorem is direct
  since for every subspace $V\subseteq\nPol$, $I+V$ is direct, indeed,
  the leading monomial of a nonzero $f\in I$ does not belong to $\nf(T)$.
  Moreover, from Proposition~\ref{prop:representations_and_duality}, $S$
  is represented by $A$ if and only if there exists $S'\in\nSeries$ such
  that $S=T^*(S')$. Hence, for every $g\in\ncPol$, we have
  $\dual{S}{g}=\dual{S'}{T(g)}$, so that
  \begin{equation}\label{equ:nc_series_duality}
    \dual{S}{g}=\dual{S}{(g-T(g))+T(g)}=\dual{S'}{T(g)}.
  \end{equation}
  The element $g-T(g)$ belongs to $I$, which is included in $\ker(S)$.
  Thus, from \eqref{equ:nc_series_duality}, $\ker(S)=I\oplus\ker(S')$.
  The statement of the theorem follows since $S$ is represented by its
  syntactic algebra~\cite{MR593604}.
\end{proof}

\medskip

\begin{example}
  \begin{enumerate}
  \item Consider the series $S$ as in \eqref{equ:words-series} and let
    $T$ be the reduction operator with kernel the syntactic ideal $I_S$.
    From Proposition~\ref{prop:syntactic_ideal_words}, $\nf(T)$ is equal
    to $\{1,x_0,x_1\}$, so that $S=T^*(S')$, $S'\in\K\nf(T)$. By
    evaluating $S$ at $1,\ x_0$ and $x_1$, we get $S'=x_1$. We easily
    check that $I_S$ is the greatest ideal included in
    $I_S\oplus\ker(S')=I_S\oplus\K\{1,x_0\}$.
  \item Consider the algebra $A:=\K\Span{X}/I$, where $X:=\{x,y\}$
    and $I$  is the ideal generated by $2$-letter words.
    In~\cite{MR593604}, it is proven that this algebra is not syntactic.
    Here, we propose another proof, based on duality. The reduction
    operator $T$ with kernel $I$ maps every word of length at least $2$
    to~$0$. Let $S'=\alpha+\beta x+\gamma y\in\K\nf(T)$. If
    $\beta=\gamma=0$ , then $I\oplus\ker(S')$ is equal to
    $I\oplus\K\{x,y\}$, and if not, it contains
    $I\oplus\K\{\gamma x-\beta y\}$. Since both $I\oplus\K\{x,y\}$ and
    $I\oplus\K\{\gamma x-\beta y\}$ are ideals, $I\oplus\ker(S')$
    contains an ideal strictly greater than $I$. Hence, the criterion of
    Theorem~\ref{thm:syntactic_algebra_duality} does not hold, that is,
    $A$ is not syntactic.
  \end{enumerate}
\end{example}

\bibliography{Biblio}

\end{document}